\newtheorem{thm}{Theorem}[section]
\newtheorem{lemma}[thm]{Lemma}
\newtheorem{coro}[thm]{Corollary}
\newtheorem{prop}[thm]{Proposition}
\newtheoremstyle{rem}{10pt}{10pt}{\rmfamily}{}{\bfseries}{.}{.5em}{}
\theoremstyle{rem}
\newtheorem{rem}[thm]{Remark}
\numberwithin{equation}{section} 
\title{On the global well-posedness of stochastic Schr\"{o}dinger–Korteweg-de Vries system}
\author[1,2]{Jie Chen}
\author[1,*]{Fan Gu}
\author[1]{Boling Guo}
\affil[1]{\scriptsize \textit{Institute of Applied Physics and Computational Mathematics, Beijing 100088, P.R. China}}
\affil[2]{\scriptsize \textit{School of Sciences, Jimei University, Xiamen 361021, P.R. China}}
\date{}
\begin{document}
	\maketitle
	
	\begin{abstract}
		In this paper, we study the global well-posedness of the stochastic S-KdV system in $H^1(\mathbb{R})\times H^1(\mathbb{R})$, which are driven by additive noises. It is difficult to show the global well-posedness of a related perturbation system even for smooth datum and stochastic forces. To overcome it, we introduce a new sequence of approximation equations, which is the key of this paper. We establish priori estimates, global well-posedness and convergences of these approximation equations, which help us to get a pathwise priori estimate of initial system.
	\end{abstract}
	
	\section{Introduction}\label{chapterintroduction}
	In this paper, we study the global well-posedness of the stochastic S–KdV system. The corresponding deterministic model is 
	\begin{equation*}\label{model}\tag{S-KdV}
		\left\{
		\begin{aligned}
			&i\partial_t u+\partial_{xx}u = \gamma_1 uv+\beta|u|^2u,\\
			&\partial_t v+\partial_{xxx}v = \gamma_2\partial_x(|u|^2)-v\partial_x v,\\
			&(u,v)|_{t = 0} = (u_0,v_0),
		\end{aligned}
		\right.
	\end{equation*}
	where $\gamma_1, \gamma_2,\beta$ are real-valued constants, $u$ is complex-valued and $v$ is real-valued.

	The deterministic \eqref{model} is an important model in fluid mechanics and plasma physics. It is devoted to describing the interactions between short waves $u(x,t)$ and long waves $v(x,t)$. The case $\beta=0$ appears in the study of resonant
	interaction between short and long capillary-gravity waves on water of a uniform finite depth, in plasma physics and in a diatomic lattice system (For more details, one can see \cite{77dynamics} \cite{77theory} \cite{corcho2007well}).  The well-posedness of this coupled system is widely researched. In \cite{Guo}, Guo studied the Cauchy problem with  in $L([0,T];H^s(\mathbb{R})\times H^s(\mathbb{R}))$, for all integers $s \geq 3$. In \cite{H1H1}, Guo-Miao proved the global well-posedness in  $H^s(\mathbb{R})\times H^s(\mathbb{R})$, for $s\in\mathbb{N^+}, \beta=0$. Concerning the well-posedness in $H^s(\mathbb{R})\times H^s(\mathbb{R})$, Corcho-Linares \cite{corcho2007well} obtained the local well-posedness for \eqref{model} when $s\geq 1/2$. See also \cite{guo2010well} for the local well-posedness of \eqref{model} in $L^2(\mathbb{R})\times H^{-3/4}(\mathbb{R})$. 
	
	As far as we know, there is no paper considering the well-posedness of \eqref{model} perturbed by stochastic forces. For single stochastic dispersive equations, 
	\cite{KdVH1} studies the strong solution of stochastic  KdV equation in Strichartz space $\tilde{X}_\sigma(T)\cap L_T^{\infty}H^1_x(\mathbb{R})$, $\sigma \in ({3}/{4},1)$ with initial value in $L^2_{\omega}H^1_x(\mathbb{R})\cap L_\omega^4L^2_x$. By using Bourgain spaces, de Bouard-Debussche-Tsutsumi \cite{de1999white} proved the well-posedness of the strong solution of stochastic KdV equation in $L^2(\mathbb{R})$. There are also a lot of references about the well-posedness of stochastic Schr\"{o}dinger equation. See for example \cite{de1999stochastic}, \cite{schrodingerH1}, \cite{zhangd}.
	
	In this paper, we study the well-posedness of the strong solutions for the stochastic S-KdV system driven by additive noises in $H^1(\mathbb{R})\times H^1(\mathbb{R})$. We focus on the global well-posedness when $\gamma_1\cdot \gamma_2>0$.
	
	Let
	$$w = (u,v),\quad L(w) =(L_1(w),L_2(w))= (i\partial_{x}^2u,-\partial_{x}^3v)$$
	and
	$$N(w) =(N_1(w),N_2(w))= (i\gamma_1 uv+i\beta|u|^2u, \gamma_2\partial_x(|u|^2)-v\partial_x v).$$
	We fix a probability space $\left(\Omega, \mathscr{F},\mathbb{P},\left(\mathscr{F}_t\right)_{t\in\left[0,T\right]} \right)$. $\frac{dW^{(1)}}{dt}, \frac{dW^{(2)}}{dt}$ are two independent white noises on $L^2\left( \mathbb{R}\right)$ adapted to $\left\{\mathscr{F}_t\right\}_{t\in\left[0,T\right]}$. $W^{(k)}, k=1,2$ can be represented as $\sum_{i=0}^{+\infty}\beta_i^{(k)}(t)e_i^{(k)}$, where $\{\beta_i^{(k)}\}$ is a sequence of mutually independent real standard Brownian motions and $\{e_i^{(k)}\}$ is an orthonormal basis of $L^2(\mathbb{R})$.
	
	Let us write the stochastic S-KdV equation in the It\^{o} form:
	\begin{equation}\label{smodel}
		\left\{
		\begin{aligned}
			&dw = (L(w)+N(w))dt+(\Phi^{(1)} dW_t^{(1)},\Phi^{(2)}dW^{(2)}_t),\\
			&w|_{t = 0} = w_0.
		\end{aligned}
		\right.
	\end{equation}
	Here $\Phi=(\Phi^{(1)},\Phi^{(2)})$ is a linear operator from $\left(L^2(\mathbb{R}), L^2(\mathbb{R})\right) $ to $(H_1, H_2)$ where $H_1, H_2$ are two Hilbert spaces. 
	
	Usually, to show the well-posedness of \eqref{smodel}, one needs the following steps: Firstly, we prove that the strong solution of the linear stochastic equation are almost surely in the aiming space. Secondly, by a pathwise fixed point argument,  we get a local mild solution in $[0,T(\omega)]$ for \eqref{smodel}. Finally, by a priori estimate, we extend the solution to $[0,T]$ for any $T>0$. 
	
	In our work, problems arise in the last step. To obtain the priori estimate, we need to use the conservation laws of \eqref{model}. However, for \eqref{smodel}, in the priori estimate of the conserved quantities, we need higher regularity than the solution has, to explain the calculation in the strong sense. Thus, we have to consider a sequence of approximation equations with high regularity and prove a uniform priori estimate of this sequence.
	
	For deterministic S-KdV in $H_x^1\times H_x^1$, to get a priori estimate of it, we only need approximation equations with smooth initial values.
	For single stochastic dispersion equations (for example \cite{KdVH1}), to get a priori estimate, it is enough to smooth the initial data and the noise term. But in our situation, it is hard to prove a  priori estimate even for smooth  initial datum and noises. Thus, we need construct new suitable approximation equations. There are two principles of the construction: One is that they must be global well-posed in the high regular space. The other is that they must have enough conservation laws to get a uniform priori estimate in $H^1(\mathbb{R})\times H^1(\mathbb{R})$. These are the key points of whole paper. 
	
	Our main result is the following theorem:
	\begin{thm}\label{thm:main}
		Suppose $u_0,v_0$ are $\mathscr{F}_0$-measurable, $ \Phi\in L_2^{0,1} \times L_2^{0,1},\  \gamma_1\cdot \gamma_2>0$ and
		$$u_0 \in L^2(\Omega; H^{1}(\mathbb{R})) \cap L^4(\Omega;L^4(\mathbb{R}))\cap L^{10}(\Omega;L^2(\mathbb{R})),
		$$
		$$
		\ v_0 \in L^2(\Omega; H^{1}(\mathbb{R})) \cap L^{3}(\Omega;L^3(\mathbb{R})). $$ 
		Then, for any $ T>0$, \eqref{smodel} exists a unique strong solution $w \in {X_{1}(T)} \ a.s.\mathbb{P}$.
	\end{thm}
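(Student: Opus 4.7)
The plan follows the three-step roadmap hinted at in the introduction: solve the linear stochastic background, obtain a local pathwise solution by a fixed-point argument, and extend globally through a newly designed approximation scheme.

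First, I would solve the decoupled linear stochastic equations $dz_1 = i\partial_{xx}z_1\,dt + \Phi^{(1)}dW^{(1)}$ and $dz_2 = -\partial_{xxx}z_2\,dt + \Phi^{(2)}dW^{(2)}$. The assumption $\Phi \in L_2^{0,1}\times L_2^{0,1}$ together with Strichartz and Bourgain-type estimates gives $z := (z_1,z_2) \in X_1(T)\cap C([0,T];H^1\times H^1)$ almost surely. Writing $w = z + y$, the remainder $y$ satisfies a random nonlinear PDE in which all the stochasticity is carried by $z$, and a pathwise contraction in the same Bourgain space produces a local mild solution on $[0,T^\ast(\omega)]$ for some positive stopping time $T^\ast$.

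The main obstacle is the global extension. Applying It\^o's formula directly to the masses $\|u\|_{L^2}^2$, $\|v\|_{L^2}^2$ and the coupled Hamiltonian $E(u,v)$ of \eqref{model} is not legitimate at the $H^1\times H^1$ regularity level because the KdV-type energy involves third derivatives, so one must work on an approximation sequence $\{w_n\}=\{(u_n,v_n)\}$. Following the strategy advertised by the authors, this sequence must simultaneously satisfy (a) global well-posedness in $H^s\times H^s$ with $s$ large enough to justify the It\^o calculus, and (b) retention of the cancellation $\gamma_1\,\mathrm{Im}\!\int v\,u\,\bar u_t\,dx + \gamma_2\!\int \tfrac12 v_t|u|^2\,dx$ coupling the Schr\"odinger mass to the KdV momentum. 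Any naive smoothing (mollifier, frequency projection) destroys this cancellation, and designing a scheme for which it survives while keeping high-regularity global solvability is the genuine technical novelty; this is where I expect the hardest work to be.

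Granted such an approximation, I would apply It\^o's formula to $\|u_n\|_{L^2}^2$, $\|v_n\|_{L^2}^2$ and $E(w_n)$. The sign hypothesis $\gamma_1\gamma_2 > 0$ gives the coupling in $E$ a favourable sign, Gagliardo--Nirenberg absorbs the $\beta|u|^4$ and $v^3$ terms using the moment assumptions $u_0\in L^4(\Omega;L^4)\cap L^{10}(\Omega;L^2)$ and $v_0\in L^3(\Omega;L^3)$, and Burkholder--Davis--Gundy handles the martingale contributions generated by $\Phi^{(k)}$. Gronwall then yields a bound on $\sup_n\mathbb{E}\,\|w_n\|_{L^\infty_t(H^1\times H^1)}^2$ independent of $n$. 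A stopping-time argument combined with convergence $w_n\to w$ in a weaker norm transfers the bound to the solution of \eqref{smodel}, ruling out finite-time blow-up and producing a global solution on $[0,T]$. Pathwise uniqueness in $X_1(T)$ is then obtained by iterating the local contraction on a partition of $[0,T]$ using the a priori bound.
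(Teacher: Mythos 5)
Your outline reproduces the paper's high-level strategy (linear stochastic estimates, pathwise fixed point, a priori bound via approximation, convergence), and you correctly diagnose that the crux is an approximation scheme that is both globally solvable at high regularity and compatible with the conserved quantities. But at exactly that point you write ``Granted such an approximation\dots'' and move on. That is the gap: the construction of the approximation equations is the main contribution of the paper, and without it the argument does not close. A single smoothing parameter, as in your sequence $\{w_n\}$, is not enough. The paper needs three separate regularizations, removed in a specific order. First, the nonlinearities are truncated by $\varphi_K(|u|^2)$ together with the companion function $\psi_K(x)=x\varphi_K'(x)+\varphi_K(x)$ in the Schr\"odinger equation; the pair $(\varphi_K,\psi_K)$ is chosen precisely so that the truncated system remains Hamiltonian, with energy density built from the primitives $\psi_{1,K}(x)=\int_0^x s\varphi_K(s)\,ds$ and $\psi_{2,K}(x)=\int_0^x s^2\varphi_K(s)\,ds$, so that It\^o's formula applied to $\mathcal{E}_t$ produces no uncontrolled drift. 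Second, the KdV nonlinearity is composed with a frequency projection $P_n$, which converts the derivative loss $\partial_x$ into a bounded operator on $L^2$ (norm $\lesssim n$) and is what makes \emph{global} solvability of the truncated system provable via an $L^2$ Gronwall bound and a blow-up alternative; without it the truncation alone does not give global high-regularity solutions. Third, the data and the noise are smoothed by $P_m$. The limits are then taken in the order $K\uparrow\infty$ (using that for $K$ large, depending on $\omega$, the cutoff is inactive on the solution), then $n\uparrow\infty$, then $m\uparrow\infty$, each in $X_1(T)$ almost surely.

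Two smaller discrepancies: the local theory in the paper is run in the mixed-norm space $X_1(T)$ built on $L^2_xL^\infty_t$ maximal-function norms and Leibniz estimates, not in Bourgain spaces (Bourgain norms would complicate the pathwise gluing of local solutions); and the conserved functional that exploits $\gamma_1\gamma_2>0$ is $\mathcal{I}_t=\int \mathrm{Im}(u\partial_x\bar u)+\frac{\gamma_1}{2\gamma_2}v^2\,dx$, i.e.\ the Schr\"odinger \emph{momentum} coupled to the KdV \emph{mass}, which is then interpolated against $\|v\|_{L^3_x}^3$ and $\|u\|_{L^4_x}^4$ to control the energy from below --- this is where the moment hypotheses $u_0\in L^{10}(\Omega;L^2)\cap L^4(\Omega;L^4)$ and $v_0\in L^3(\Omega;L^3)$ enter. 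Your identification of where these hypotheses are used is correct in spirit, but the proposal cannot be considered complete until the approximation system itself is exhibited and its global well-posedness proved.
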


	Our paper is organized in the following manner: In Section \ref{chapterpreliminary}, we introduce notations, definitions, the workspace and properties of  the linear stochastic equation. In Section \ref{chapterpriori}, we construct the approximation equations and prove a uniform priori estimate. In Section \ref{chapterwellposedness}, we prove the global well-posedness of the approximation equations. Finally, in Section \ref{chapterconvergence}, we get convergences of the approximation equations in $X_1(T)$ and finish the proof of Theorem \ref{thm:main}.

	\section{Preliminary}\label{chapterpreliminary}
	
	In this section,  we  give some notations  and definitions. We will propose the workspace of our paper and introduce  necessary estimates of the linear stochastic equation. 
	
	In our paper, $C, \tilde{C}, C_k,... $ denote various constants which may depend on $\gamma_1,\gamma_2,\beta$ only. We use $C(x,y,\cdots)$ to represent constants depending on some parameters $x,y\cdots$. For $a, b \in \mathbb{R}^+$, $ a\lesssim b$ means that there exists $C>0$ such that $a \leq Cb$. We use $\text{supp}~f$ to denote the supporting set of $f$.

	For $u \in \mathscr{S}'(\mathbb{R})$, we use $\mathcal{F}u$ and $ \hat{u}$ to denote the Fourier transform of $u$. We denote   $Du=\mathcal{F}^{-1}(|\xi|\hat{u}(\xi)) $ and $J^su=\mathcal{F}^{-1}\left((1+\xi^2)^{\frac{s}{2}}\hat{u}(\xi)\right)$.
	We denote by $\left(\cdot,\cdot\right)$ the $L^2$ inner product
	$$\left(f(x),g(x)\right)=\int_{\mathbb{R}^2} f(x) \overline{g(x)} dx.$$
	For $u$, we use complex-valued function spaces, and for $v$, we use real-valued function spaces. For $s\in \mathbb{R}$, with a little abuse of notation, we use $H^s(\mathbb{R})$ to denote the Sobolev space of order $s$ and $\mathcal{H}^s_x:=H^s_x\times H^s_x$. 
	We also use $L_{x,T}^p$ to denote $L_t^p(0,T;L_x^p)$.
	
	Given $H$ a Hilbert space, we denote by $L_2^0(L^2(\mathbb{R});H)$ the space of Hilbert-Schmidt operators from $L^2(\mathbb{R})$ into $H$. Its norm is given by
	$$ \left\|\Phi^{(k)}\right\|^2_{L_2^0(L^2(\mathbb{R}),H)}=\sum_{i\in\mathbb{N}} \left\| \Phi^{(k)} e_i \right\|^2_H.$$
	When $H=H^s(\mathbb{R})$, we write ${L_2^0(L^2(\mathbb{R}),H)}=L_2^{0,s}$.
	
	For brevity, let $S(t)=e^{it\partial_{x}^2},\quad U(t)=e^{-t\partial_{x}^3}$.
	For any $T>0$, the mild solution of \eqref{smodel} is
	\begin{equation}
		\begin{aligned}
			w_t=&\left(S(t)u_0,U(t)v_0\right) + \int_0^t(S(t-s)\Phi^{(1)} dW^{(1)}_s,U(t-s)\Phi^{(2)} dW_s^{(2)}) \\
			&+\int_0^t(S(t-s)N_1(w_s),U(t-s)N_2(w_s)) ds, \ \ t \in [0,T].
		\end{aligned}
	\end{equation}
	
	In this paper, we firstly concern the well-posedness of linear stochastic S-KdV:
	\begin{equation}\label{lsskdv}
		\left\{\begin{aligned}
			&d\breve{w}=L(\breve{w})dt+(\Phi^{(1)} dW_t^{(1)},\Phi^{(2)}dW^{(2)}_t), \\
			&\breve{w}|_{t=0}=(0,0).
		\end{aligned}\right.
	\end{equation}
	The mild solution of \eqref{lsskdv} is
	\begin{equation}\label{solution_of_lsskdv}
		\breve{w}_t=(\breve{u}_t,\breve{v}_t)=\int_0^t(S(t-s)\Phi^{(1)} dW^{(1)}_s,U(t-s)\Phi^{(2)} dW_s^{(2)}), \ t\in [0,T].
	\end{equation}
	
	By virtue of the Christ-Kiselev lemma in \cite{molinet2004well} and the Leibniz-type estimates in \cite{benea2016multiple}, we  use the below workspace to prove the well-posedness of \eqref{smodel}:
	\begin{equation*}
		\begin{array}{cl}
			X_{\sigma}(T)=&\{ u \in C([0,T]; H^\sigma(\mathbb{R})),\ \  u\in  L^2(\mathbb{R};L^{\infty}([0,T])),\\
			& \ v \in C([0,T];H^\sigma(\mathbb{R})),\ \  v \in L^2(\mathbb{R}; L^{\infty}([0,T])) \}.
		\end{array}
	\end{equation*}
	This space is much conciser than the workspace in \cite{dkdv}.
	We also denote
	$$\|(u,v)\|_{X_{\sigma}(T)}=\|u\|_{X_{\sigma}^1(T)}+\|v\|_{X_{\sigma}^2(T)}.$$ 
	
	As an example, for the case $\sigma=2$, we briefly show why this space works. We need the following lemma.
	
	\begin{lemma}[Theorem 4 in \cite{benea2016multiple}]\label{lem:leibnitz}
		For $s\geq 0$, $1<q_1,r_1,q_2,r_2\leq \infty$, $1/q_1+1/q_2 = 1/q$, $1/r_1+1/r_2 = 1/r$, $1\leq q,r<\infty$, we have
		\begin{align*}
			\|J^s(uv)\|_{L_x^rL_t^q}&\lesssim \|J^su\|_{L_x^{r_1}L_t^{q_1}}\|v\|_{L_x^{r_2}L_t^{q_2}}+\|J^sv\|_{L_x^{r_1}L_t^{q_1}}\|u\|_{L_x^{r_2}L_t^{q_2}}.
		\end{align*}
	\end{lemma}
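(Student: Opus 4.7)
The plan is to establish the inequality through a Littlewood-Paley paraproduct decomposition together with a mixed-norm vector-valued maximal inequality. Fix a smooth dyadic partition of unity $\{\psi_k\}_{k\in\mathbb{Z}}$ on the frequency side and set $u_k = \psi_k(D)u$, $v_j=\psi_j(D)v$. I would split
\[ uv = \Pi_{hl}(u,v) + \Pi_{ll}(u,v) + \Pi_{lh}(u,v), \]
collecting in $\Pi_{hl}$ the pairs $(k,j)$ with $k \geq j+2$, in $\Pi_{lh}$ those with $j\geq k+2$, and in $\Pi_{ll}$ the diagonal $|k-j|\leq 2$.

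For the off-diagonal piece $\Pi_{hl}(u,v)$ the factor $u_k v_j$ is frequency-localized at size $\sim 2^k$, so after applying $J^s$ one transfers the derivative weight $2^{sk}$ onto $u$, while the sum in $j$ assembles into a low-pass $S_{k-2}v$. This yields the pointwise bound
\[ |J^s\Pi_{hl}(u,v)(x,t)| \lesssim \Bigl(\sum_k |(J^s u)_k(x,t)|^2\Bigr)^{1/2}(M_x M_t v)(x,t), \]
where $M_x, M_t$ are the Hardy-Littlewood maximal operators in the spatial and temporal variables. Invoking the Littlewood-Paley square-function characterization of $\|J^s u\|$, H\"older in mixed norms, and the $L^{r_2}(L^{q_2})$ boundedness of $M_x M_t$ (which is where the assumption $1<q_i,r_i$ is used) produces the first term on the right-hand side of the claimed inequality. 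The symmetric piece $\Pi_{lh}$ produces the second term.

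The diagonal piece $\Pi_{ll}$ is more delicate, since $u_k v_k$ has frequency support in a ball of radius $\sim 2^k$ and thus feeds every dyadic annulus below scale $2^k$. To handle it I would apply $\psi_n(D)$ to $\Pi_{ll}$ and use a bilinear bump / integration-by-parts argument on the multiplier to extract geometric decay $2^{-\alpha(k-n)}$ for $k\geq n$; summing in $k$, applying Minkowski in $\ell^2_n$, and invoking the mixed-norm Fefferman-Stein inequality on $L_x^r(L_t^q;\ell^2)$ then absorbs this piece into the same two right-hand terms.

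The main obstacle, and the reason the statement is credited to Benea-Muscalu rather than following directly from classical Kato-Ponce, is the mixed-norm setting with possibly infinite exponents. What one really needs are multiple vector-valued extensions of both the Fefferman-Stein maximal inequality and bilinear Calder\'on-Zygmund theory adapted to $L_x^r L_t^q$, including the endpoints $q_i=\infty$ or $r_i=\infty$. Setting up these vector-valued extensions carefully—rather than any individual paraproduct estimate—is where the substantive work of the proof lies.
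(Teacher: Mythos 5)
The paper does not actually prove this lemma: it is quoted (in the one-parameter case) from Theorem 4 of Benea--Muscalu, and the accompanying remark only restates that theorem and observes that the degenerate case $\alpha=0$ or $\beta=0$ follows from the same proof. So there is no in-paper argument to measure yours against; the comparison has to be with the cited proof, and there your outline diverges in a way that matters.

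Your paraproduct scheme is the classical Kato--Ponce route and would go through in the reflexive range $1<r,q<\infty$. The genuine gap is at the output endpoint $r=1$ (and $q=1$), which is explicitly allowed by the hypotheses ($1\le q,r<\infty$) and is in fact the only case the paper needs: the KdV nonlinearity is estimated in $L^1_xL^2_t$ with one factor in $L^2_xL^\infty_t$, exactly the endpoint the remark says Kenig--Ponce--Vega could not handle. At $r=1$ both pillars of your reduction fail: the reverse Littlewood--Paley inequality $\|J^sF\|_{L^1_xL^2_t}\lesssim\bigl\|\bigl(\sum_n 4^{sn}|\psi_n(D)F|^2\bigr)^{1/2}\bigr\|_{L^1_xL^2_t}$, which you need to pass from the target norm to a square function of the paraproduct pieces, is false in general (the randomized multiplier $\sum_n\pm\psi_n$ is not bounded on $L^1$), and the mixed-norm Fefferman--Stein inequality you invoke for the diagonal piece is likewise unavailable on $L^1_x$. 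You correctly sense that the substance lies in ``vector-valued extensions,'' but you locate the difficulty in the infinite input exponents $q_i,r_i=\infty$ --- which the $L^\infty$-boundedness of the Hardy--Littlewood maximal operator largely disposes of --- rather than in the $L^1$ output, which is the real obstruction. This is precisely why Benea--Muscalu do not argue as you propose: their helicoidal method proves localized, multiple vector-valued estimates for each paraproduct directly, via stopping-time/outer-measure decompositions, and never routes the target norm through a square-function characterization; that is what lets them reach the (quasi-)Banach range $r\le 1$. Restricted to $1<r,q<\infty$ your sketch is completable, but it does not prove the lemma as stated, nor the instance of it that this paper actually uses.
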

	\begin{rem}
		Theorem 4 in \cite{benea2016multiple} is: $\forall~\alpha,\beta>0$,
		\begin{align*}
			&\quad\|D_1^\alpha D_2^\beta (fg)\|_{L_x^{s_1}L_y^{s_2}}\\
			&\lesssim \|D_1^{\alpha}D_2^\beta f\|_{L_x^{p_1}L_y^{p_2}}\|g\|_{L_x^{q_1}L_y^{q_2}}+\| f\|_{L_x^{p_3}L_y^{p_4}}\|D_1^{\alpha}D_2^\beta g\|_{L_x^{q_3}L_y^{q_4}}\\
			&\quad+\|D_1^{\alpha}f\|_{L_x^{p_5}L_y^{p_6}}\|D_2^\beta g\|_{L_x^{q_5}L_y^{q_6}}+\|D_2^\beta f\|_{L_x^{p_7}L_y^{p_8}}\|D_1^{\alpha}g\|_{L_x^{q_7}L_y^{q_8}}
		\end{align*}
		whenever $1<p_j,q_j\leq \infty$, $\max\{1/2,1/(1+\alpha)\}<s_1<\infty$, $1\leq s_2<\infty$, and the indices satisfy the natural H\"{o}lder-type conditions. In fact, it is easy to see that the inequality also holds for $\alpha = 0$ or $\beta = 0$ by the proof. See also \cite{benea2017quasi} and Theorem 1 in \cite{benea2020mixed}. In \cite{kenig1993well}, they cannot manipulate the endpoint case $r = 1$, $q_2 = \infty$. Thus, they use Strichartz estimate and Kato smoothing estimate to avoid this. 
		
		For us, using such strong estimate, the proof of local well-posedness is easier. Note that Lemma \ref{lem:leibnitz} is the vector-valued version of classical fractional Leibniz estimates. See for example \cite{coifman48wavelets}, \cite{muscalushlag}. 
	\end{rem}
	
	For \eqref{model} in $t\in [0,T]$, by  the algebra property of $H^1$, the smoothing effect of $U(t)$ and Lemma \ref{lem:leibnitz}, we have
	\begin{align*}
		\|u(t)\|_{L_t^\infty H^2_x}
		&\leq C(T)\|u_0\|_{H_x^2}+TC(\| u\|_{L_t^\infty H^2_x}\| v\|_{L_t^\infty H^1_x}+\| u\|_{L_t^\infty H^1_x}\| v\|_{L_t^\infty H^2_x}\\
		&\quad\hspace{100pt} +\|u\|^2_{L_t^\infty H^1_x}\|u\|_{L_t^\infty H^2_x}),\\
		\|v(t)\|_{L_t^\infty H^2_x}&\leq C(T)\|v_0\|_{H_x^2}+T^{\frac{1}{2}}C(T)(\|u\|_{L^\infty_tH_x^2}\|u\|_{L_x^2L_t^\infty}\\
		&\quad\hspace{100pt} +\|v\|_{L^\infty_tH_x^2}\|v\|_{L_x^2L_t^\infty}).
	\end{align*}
	By Christ-Kiselev lemma, if we want to estimate
	$$
	\left\|\int_{0}^{t} U(t-s)\partial_{x}(\gamma_2|u|^2-\frac{1}{2}v^2) ds\right\|_{L_x^2L_{t\in[0,1]}^\infty},
	$$
	we only need to estimate 
	$$
	\left\|\int_{0}^{1} U(t-s)\partial_{x}(\gamma_2|u|^2-\frac{1}{2}v^2) ds\right\|_{L_x^2L_{t\in[0,1]}^\infty}.
	$$
	Thus, by the smoothing effect of $U(t)$ and Lemma \ref{lem:leibnitz}, we have
	\begin{align*}
		\| u(t)\|_{L_x^2L_t^\infty}&\leq \tilde{C}(T)\|u_0\|_{H_x^{1}}+T\tilde{C}(T)(\| u\|_{L_t^\infty H^{1}_x}\| v\|_{L_t^\infty H^{1}_x}\\
		&\quad \ \ \ \ \ \  \ \ \ \ \ \ \ \ \ \ \ \ \ \ \ \ \ \ \ \ \ \ \ \ 
		+\|u\|_{L_t^\infty H^{1}_x}\|u\|^2_{L_t^\infty H^{1}_x}),\\
		\| v(t)\|_{L_x^2L_t^\infty}&\leq \tilde{C}(T)\|v_0\|_{H_x^{1}}+(1+T)^{\frac{1}{2}}T^{\frac{1}{2}}C(\|u\|_{L^\infty_tH_x^{1}}\|u\|_{L_x^2L_t^\infty}\\
		&\quad \ \ \ \ \ \  \ \ \ \ \ \ \ \ \ \ \ \ \ \ \ \ \ \ \ \ \ \ \ \ +\|v\|_{L^\infty_tH_x^{1}}\|v\|_{L_x^2L_t^\infty}).
	\end{align*}
	Here $C(T),\tilde{C}(T)$ are non-decrease with respect to $T$. 
	
	According to the above illustration and Leibniz-type estimates, it is easy to prove the following lemma.
	\begin{lemma}\label{lem:workspace}
		For any $ \sigma \in\mathbb{N^+} $ and $ T>0 $, we have 
		
		$i).$ $$\|(S(t)u_0,U(t)v_0)\|_{X_{\sigma}(T)}\leq C(T,\sigma)(\|u_0\|_{H^\sigma_x}+\|v_0\|_{H^\sigma_x});$$
		
		$ii).$
		\begin{equation*}
			\begin{aligned}
				&\quad\left\|\int_0^tS(t-s)N_1(w_s) ds\right\|_{X_{\sigma}^{1}(T)} \\
				&\leq  C(T,\sigma ,\gamma_1,\beta) T \left(\|u\|_{X_{\sigma}^{1}(T)}(\|u\|^2_{X_{1}^{1}(T)}+\|v\|_{X_{1}^{1}(T)})+\|v\|_{X_{\sigma}^{2}(T)}\|u\|_{X_{1}^{1}(T)}\right);
			\end{aligned}	
		\end{equation*}
		
		$iii).$
		\begin{equation*}
			\begin{aligned}
				&\quad\left\|\int_0^tU(t-s)N_2(w_s) ds\right\|_{X_{\sigma}^{2}(T)}\\
				&\leq C(T,\sigma,\gamma_2) T^{\frac{1}{2}}\left( \| v\|_{X_{\sigma}^{2}(T)}\| v\|_{X_{1}^{2}(T)} +\| u\|_{X_{\sigma}^{1}(T)}\| u\|_{X_{1}^{1}(T)} \right).
			\end{aligned}
		\end{equation*}
		Here $C(T,\sigma)$, $C(T,\sigma ,\gamma_1,\beta)$ and $ C(T,\sigma,\gamma_2)$ are non-decreasing with respect to $T$.
	\end{lemma}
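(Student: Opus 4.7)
The plan is to extend the $\sigma=2$ argument already sketched in the preceding paragraphs to arbitrary $\sigma\in\mathbb{N}^+$, using the same building blocks: unitarity of $S(t)$ and $U(t)$, the 1D Kenig-Ruiz maximal function estimate for the Schr\"odinger group and its Airy analogue, the local smoothing of $U(t)$, the Christ-Kiselev lemma, and the mixed-norm fractional Leibniz estimate from Lemma \ref{lem:leibnitz}.

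For (i), the $C([0,T];H^\sigma_x)$ part of the norm is immediate from the unitarity of $S(t)$ and $U(t)$ on $H^\sigma(\mathbb{R})$. For the $L^2_xL^\infty_t$ parts I would invoke the 1D Schr\"odinger maximal function bound $\|\sup_{0<t<T}|S(t)u_0|\|_{L^2_x}\lesssim C(T)\|u_0\|_{H^s_x}$ for any $s>1/2$, together with its Airy analogue; since $\sigma\geq 1>1/2$ these apply, and a non-decreasing $T$-dependent constant comes from decomposing $[0,T]$ into unit subintervals.

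For (ii), I would split each norm. On the $L^\infty_tH^\sigma_x$ side, Minkowski in time and unitarity reduce the Duhamel term to $\int_0^T\|N_1(w_s)\|_{H^\sigma_x}\,ds\leq T\sup_s\|N_1(w_s)\|_{H^\sigma_x}$; the standard fractional Leibniz rule (Lemma \ref{lem:leibnitz} with $r_j,q_j=\infty$) and Sobolev embedding $H^1\hookrightarrow L^\infty$ give
\begin{equation*}
\|uv\|_{H^\sigma_x}\lesssim \|u\|_{H^\sigma_x}\|v\|_{H^1_x}+\|v\|_{H^\sigma_x}\|u\|_{H^1_x},\qquad \||u|^2u\|_{H^\sigma_x}\lesssim \|u\|_{H^\sigma_x}\|u\|_{H^1_x}^2,
\end{equation*}
which is exactly the desired structure. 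On the $L^2_xL^\infty_t$ side, Christ-Kiselev reduces the retarded integral to $\int_0^T$, and then $S(t)\int_0^T S(-s)N_1(w_s)\,ds$ is bounded via the same Schr\"odinger maximal estimate applied to $\int_0^T S(-s)N_1(w_s)\,ds\in H^\sigma_x$, producing the same right-hand side up to the inherited $T$-factor.

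For (iii), the scheme is identical but the derivative in $N_2=\partial_x(\gamma_2|u|^2-\tfrac{1}{2}v^2)$ must be absorbed by the Kato smoothing of $U(t)$. For the $L^\infty_tH^\sigma_x$ part I would use the smoothing/dual smoothing pair $\bigl\|\partial_x\int_0^tU(t-s)F(s)\,ds\bigr\|_{L^\infty_tL^2_x}\lesssim T^{1/2}\|F\|_{L^2_xL^2_t}$, then apply Lemma \ref{lem:leibnitz} in the mixed norm $L^2_xL^2_t$ with the split $L^\infty_xL^2_t\cdot L^2_xL^\infty_t$ so that one factor carries $J^\sigma$ and is controlled by $\|\cdot\|_{L^\infty_tH^\sigma_x}$ (via Sobolev embedding in $x$), while the other factor sits in $L^2_xL^\infty_t\subset X_1$. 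The $L^2_xL^\infty_t$ component of the Duhamel term is treated analogously via Christ-Kiselev plus the Airy maximal/smoothing estimate, producing the factor $T^{1/2}$ in both places. The main technical obstacle is precisely this mixed-norm bookkeeping: one must choose the H\"older exponents in Lemma \ref{lem:leibnitz} so that each factor lands in a norm controlled by either $X_\sigma$ (when it carries $J^\sigma$) or $X_1$ (when it does not), while all $T$-constants remain non-decreasing. Once these pairings are fixed, every term in $N_1$ and $N_2$ is dispatched by one application of Lemma \ref{lem:leibnitz} combined with Sobolev embedding.
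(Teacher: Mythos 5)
Your overall scheme (unitarity for the $C_tH^\sigma_x$ components, maximal-function estimates plus Christ--Kiselev for the $L^2_xL^\infty_t$ components, Kato smoothing to absorb the derivative in $N_2$, and Lemma \ref{lem:leibnitz} to distribute $J^\sigma$) is the same as the paper's, and parts (i) and (ii) go through as you describe. The gap is in the mixed-norm bookkeeping for part (iii), which you yourself flag as the main technical obstacle but then resolve incorrectly. The estimate you invoke,
$\bigl\|\partial_x\int_0^tU(t-s)F(s)\,ds\bigr\|_{L^\infty_tL^2_x}\lesssim T^{1/2}\|F\|_{L^2_xL^2_t}$,
is false: by duality it would amount to $\|\partial_xU(t)\phi\|_{L^2_xL^2_{[0,T]}}\lesssim T^{1/2}\|\phi\|_{L^2_x}$, i.e.\ a gain of a full derivative in an $L^2_xL^2_t$ norm, which fails for highly oscillatory data (unitarity gives $\|\partial_xU(t)\phi\|_{L^2_xL^2_{[0,T]}}=T^{1/2}\|\partial_x\phi\|_{L^2_x}$ with no gain). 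The genuine dual Kato smoothing estimate for the Airy group gains the derivative only against the $L^1_xL^2_t$ norm: $\bigl\|\int_0^tU(t-s)\partial_xG(s)\,ds\bigr\|_{L^\infty_TL^2_x}\lesssim\|G\|_{L^1_xL^2_T}$. Moreover, even granting your smoothing estimate, your proposed Leibniz split $L^\infty_xL^2_t\cdot L^2_xL^\infty_t$ puts $J^\sigma$ on a factor measured in $L^\infty_xL^2_t$, and $\|J^\sigma u\|_{L^\infty_xL^2_t}$ is \emph{not} controlled by the workspace norm $\|u\|_{X^1_\sigma(T)}=\|u\|_{C_tH^\sigma_x}+\|u\|_{L^2_xL^\infty_t}$; recovering it from $L^\infty_tH^\sigma_x$ via Sobolev embedding in $x$ costs an extra half derivative and the estimate does not close. (The paper explicitly notes that its workspace omits the local-smoothing norm $\|\partial_x\cdot\|_{L^\infty_xL^2_t}$, so you cannot lean on it implicitly.)

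The correct pairing, and the reason the paper emphasizes the endpoint $r=1$, $q_2=\infty$ of Lemma \ref{lem:leibnitz}, is to estimate the nonlinearity in $L^1_xL^2_T$:
\begin{equation*}
\|J^\sigma(uv)\|_{L^1_xL^2_T}\lesssim \|J^\sigma u\|_{L^2_xL^2_T}\|v\|_{L^2_xL^\infty_T}+\|J^\sigma v\|_{L^2_xL^2_T}\|u\|_{L^2_xL^\infty_T},
\end{equation*}
with $\|J^\sigma u\|_{L^2_xL^2_T}\leq T^{1/2}\|u\|_{L^\infty_TH^\sigma_x}$ supplying the $T^{1/2}$ and both remaining factors lying in $X_\sigma$ or $X_1$; this feeds directly into the dual smoothing estimate above (and, after Christ--Kiselev, into the Airy maximal-function estimate for the $L^2_xL^\infty_t$ component). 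With that substitution your argument matches the paper's $\sigma=2$ illustration and extends to all $\sigma\in\mathbb{N}^+$; as written, part (iii) does not go through.
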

	Hence, we can see the choice of workspace is appropriate.
	
	Now, we turn to properties of the solution of \eqref{lsskdv}. In this paper, the following proposition which is similar to the Theorem 3.2 in \cite{KdVH1}, will be needed. 
	However, because of the choice of workspace (our workspace doesn't need $\|\partial_x \cdot\|_{L_x^\infty L_t^2}$), the spatial regularity of $\breve{w}$ can be equal to the spatial regularity of $\Phi$.
	
	\begin{prop}\label{prop:linear}
		Suppose $\sigma \in \mathbb{N^+}$. Then,
		\begin{equation}\label{L2esti}
			\mathbb{E}\left( \|\breve{w}\|_{X_{\sigma}(T)}^2 \right) \leq C(T,\sigma)\left(\|\Phi^{(1)}\|_{L_2^{0,\sigma}}^2+\|\Phi^{(2)}\|_{L_2^{0,\sigma}}^2\right)
		\end{equation}
		for any $T>0$.
	\end{prop}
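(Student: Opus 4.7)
The plan is to treat the four pieces making up $\|\breve{w}\|_{X_\sigma(T)}$ separately, namely the two pathwise ``energy'' norms $\|\breve{u}\|_{L^\infty_t H^\sigma_x}$, $\|\breve{v}\|_{L^\infty_t H^\sigma_x}$ and the two ``maximal'' norms $\|\breve{u}\|_{L^2_x L^\infty_t}$, $\|\breve{v}\|_{L^2_x L^\infty_t}$, control each in $L^2(\Omega)$, and then sum.

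For the two energy norms I would exploit that $S(t)$ and $U(t)$ act as unitary groups on $H^\sigma(\mathbb{R})$. Using the group property, write
$$\breve{u}_t = S(t) M_t^{(1)}, \quad M_t^{(1)} := \int_0^t S(-s)\Phi^{(1)}\, dW_s^{(1)},$$
and analogously $\breve{v}_t = U(t) M_t^{(2)}$. Each $M^{(k)}$ is then a genuine $H^\sigma$-valued square-integrable martingale, so the unitarity, Doob's $L^2$ maximal inequality, and the It\^{o} isometry give
$$\mathbb{E}\sup_{t\in[0,T]}\|\breve{u}_t\|_{H^\sigma}^2 = \mathbb{E}\sup_{t\in[0,T]}\|M_t^{(1)}\|_{H^\sigma}^2 \leq 4\,\mathbb{E}\|M_T^{(1)}\|_{H^\sigma}^2 = 4T\|\Phi^{(1)}\|_{L_2^{0,\sigma}}^2,$$
and the identical computation controls $\breve{v}$.

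The harder part, and what I expect to be the main obstacle, is the $L^2_x L^\infty_t$ control: the kernel $S(t-s)$ depends on the outer variable $t$ inside the stochastic integral, so one cannot simply push the sup in $t$ past a pointwise-in-$x$ Burkholder inequality. My plan here is the factorization method of Da Prato--Kwapie\'{n}--Zabczyk: for some $\alpha\in(0,1/2)$,
$$\breve{u}_t = c_\alpha \int_0^t (t-r)^{\alpha-1} S(t-r) Y_r^{(1)}\, dr, \quad Y_r^{(1)} := \int_0^r (r-s)^{-\alpha} S(r-s)\Phi^{(1)}\, dW_s^{(1)}.$$
The outer integral is now a deterministic convolution path by path, so a deterministic maximal-function estimate for $S(t)$ (the same Kenig--Ruiz-type estimate underlying Lemma~\ref{lem:workspace}, applicable because $\sigma\in\mathbb{N}^+$ means $\sigma\geq 1$) bounds $\|\breve{u}\|_{L^2_x L^\infty_t}$ by a suitable $L^p_t H^\sigma_x$ norm of $Y^{(1)}$; the It\^{o} isometry then yields $\mathbb{E}\|Y_r^{(1)}\|_{H^\sigma}^2 \lesssim r^{1-2\alpha}\|\Phi^{(1)}\|_{L_2^{0,\sigma}}^2$, which is integrable over $[0,T]$ thanks to the choice $\alpha<1/2$. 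The analogous bound for $\breve{v}$ follows by the same scheme with the Kenig--Ponce--Vega maximal estimate for the Airy group $U(t)$ replacing the Schr\"{o}dinger one. Summing the four contributions, with constants that are manifestly non-decreasing in $T$, delivers \eqref{L2esti}.
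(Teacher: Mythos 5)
Your splitting of $\|\breve w\|_{X_\sigma(T)}$ into the two group-invariant norms and the two maximal-function norms is exactly the paper's reduction, and your treatment of $\mathbb{E}\sup_t\|\breve u_t\|_{H^\sigma_x}^2$ is in substance the paper's Lemma \ref{lem:Linfty_TH^s_x}: both arguments rest on unitarity of $S(t)$ to identify $\|\breve u_t\|_{H^\sigma_x}$ with the norm of the genuine martingale $M^{(1)}_t=\int_0^tS(-s)\Phi^{(1)}dW^{(1)}_s$; you then invoke Doob where the paper applies the It\^{o} formula to the squared norm and closes with BDG and Young. Your version is cleaner but not different in any essential way, and both give the constant $CT\|\Phi^{(1)}\|^2_{L_2^{0,\sigma}}$. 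The real content of the proposition is the $L^2_xL^\infty_T$ bound, which the paper does not prove at all: Lemma \ref{lem:L_x^2L_T^infty} is stated with the proof omitted and deferred to Proposition 3.2 of \cite{KdVH1}, with the remark that that proof is ``complex.''

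It is precisely there that your sketch has a gap. After factorization you must bound $\bigl\|\int_0^t(t-r)^{\alpha-1}S(t-r)Y^{(1)}_r\,dr\bigr\|_{L^2_xL^\infty_T}$, and the step ``a deterministic maximal-function estimate bounds this by an $L^p_tH^\sigma_x$ norm of $Y^{(1)}$'' does not go through by the obvious manipulations. Minkowski's integral inequality applied directly in $r$ produces $\int_0^T\bigl\|\sup_{t>r}(t-r)^{\alpha-1}|S(t-r)Y^{(1)}_r|\bigr\|_{L^2_x}dr$, which is infinite because $\sup_{t>r}(t-r)^{\alpha-1}=\infty$. If instead you first apply H\"{o}lder in $r$ to peel off the singular weight, integrability of $(t-r)^{(\alpha-1)\rho'}$ forces $\rho>1/\alpha>2$ (since $\alpha<1/2$ is required for $Y^{(1)}$ to exist), and you are left needing $\|g\|_{L^2_xL^\rho_r}\lesssim\|g\|_{L^\rho_rL^2_x}$ with $g_r(x)=\sup_t|S(t-r)Y^{(1)}_r(x)|$ --- but Minkowski moves an inner $L^\rho_r$ past an outer $L^2_x$ only when $\rho\le 2$, i.e.\ in exactly the regime you cannot reach. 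This reversed mixed norm (space outside, time inside) is what makes the maximal-function estimate for stochastic convolutions genuinely harder than the Strichartz-type bounds $L^q_tL^r_x$ for which factorization is standard, and it is presumably why the cited proof is long; a workable route goes through a temporal Sobolev embedding $W^{\gamma,q}_t\hookrightarrow L^\infty_t$ applied for fixed $x$, followed by Jensen and pointwise Gaussian moment equivalence to reduce everything to second-moment kernels. Relatedly, even where your scheme does apply you would need $\rho$-th moments of $Y^{(1)}_r$ with $\rho>2$, not just the It\^{o}-isometry bound on $\mathbb{E}\|Y^{(1)}_r\|^2_{H^\sigma_x}$ that you compute (harmless, by Gaussianity, but it should be said). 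So: energy norms, correct and essentially the paper's argument; maximal norms, the strategy as written would not close.
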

	
	According to \cite{KdVH1}, we only need to prove 
	$$ \mathbb{E}\left( \|\breve{u}\|_{L_T^{\infty}H^\sigma_x}^2\right) \leq TC\|\Phi^{(1)}\|_{L_2^{0,\sigma}}^2\ \ \text{and}  \ \ \mathbb{E}\left( \|\breve{u}\|_{L^2_xL^{\infty}_T}^2 \right) \leq C(T,\sigma)\|\Phi^{(1)}\|_{L_2^{0,\sigma}}^2.$$
	Here, $C(T,\sigma)$ is non-decreasing with respect to $T$.
	\begin{lemma}\label{lem:Linfty_TH^s_x}
		We have
		\begin{equation}\label{Linfty_TH^s_x}
			\mathbb{E}\left(\|\breve{u}\|_{L_T^{\infty}H^\sigma_x}^2 \right) \leq TC\|\Phi^{(1)}\|_{L_2^{0,\sigma}}^2,\ \
		\end{equation}
	\end{lemma}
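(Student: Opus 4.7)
The plan is to exploit the unitarity of the Schrödinger group $S(t)=e^{it\partial_x^2}$ on $H^\sigma(\mathbb{R})$ (which holds for every $\sigma\in\mathbb{R}$ since $S(t)$ is the Fourier multiplier by $e^{-it\xi^2}$, of modulus one) to convert the stochastic convolution into a genuine martingale, after which Doob's $L^2$ maximal inequality and the Itô isometry give the claim.

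First I would introduce the auxiliary process
$$\tilde{u}_t := \int_0^t S(-s)\Phi^{(1)}\,dW^{(1)}_s = \sum_{i\geq 0} \int_0^t S(-s)\Phi^{(1)} e_i^{(1)}\,d\beta_i^{(1)}(s),$$
which, under the hypothesis $\Phi^{(1)}\in L_2^{0,\sigma}$, is well defined as a continuous $H^\sigma_x$-valued martingale (each integrand $S(-s)\Phi^{(1)}e_i^{(1)}$ has $H^\sigma$-norm equal to $\|\Phi^{(1)}e_i^{(1)}\|_{H^\sigma}$, so the Hilbert--Schmidt condition is preserved). By the semigroup property, $\breve{u}_t = S(t)\tilde{u}_t$, and hence for every $t\in[0,T]$ and $\omega\in\Omega$ one has $\|\breve{u}_t\|_{H^\sigma_x} = \|\tilde{u}_t\|_{H^\sigma_x}$.

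Next I would apply Doob's maximal inequality for the continuous $H^\sigma_x$-valued martingale $\tilde{u}$:
$$\mathbb{E}\sup_{t\in[0,T]}\|\tilde{u}_t\|_{H^\sigma_x}^2 \leq 4\,\mathbb{E}\|\tilde{u}_T\|_{H^\sigma_x}^2,$$
and then evaluate the right-hand side via the Itô isometry together with the unitarity of $S(-s)$ on $H^\sigma_x$:
$$\mathbb{E}\|\tilde{u}_T\|_{H^\sigma_x}^2 = \int_0^T \sum_{i\geq 0}\|S(-s)\Phi^{(1)}e_i^{(1)}\|_{H^\sigma_x}^2\,ds = T\,\|\Phi^{(1)}\|_{L_2^{0,\sigma}}^2.$$
Combining these identifications yields
$$\mathbb{E}\bigl(\|\breve{u}\|_{L_T^\infty H^\sigma_x}^2\bigr) = \mathbb{E}\sup_{t\in[0,T]}\|\tilde{u}_t\|_{H^\sigma_x}^2 \leq 4T\,\|\Phi^{(1)}\|_{L_2^{0,\sigma}}^2,$$
which is the desired estimate with $C=4$.

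There is no real obstacle here beyond confirming that the Hilbert--Schmidt norm of $S(-s)\Phi^{(1)}$ into $H^\sigma_x$ is independent of $s$, which is exactly the unitarity property on Sobolev spaces; this is the essential gain compared with the KdV part (studied in the companion Lemma for $\breve{v}$), where the group $U(t)$ is also unitary on $H^\sigma_x$ but the smoothing estimate in $L^2_xL^\infty_T$ must be obtained by a different, genuinely dispersive argument. The same device (martingale reduction via the unitary group, followed by Doob and Itô) is exactly what makes the spatial regularity of $\breve{w}$ match that of $\Phi$, as already advertised in the paragraph preceding Proposition \ref{prop:linear}.
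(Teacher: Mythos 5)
Your argument is correct, and it rests on the same key observation as the paper's proof: by unitarity of $S(t)$ on $H^\sigma_x$, the stochastic convolution satisfies $\|\breve{u}_t\|_{H^\sigma_x}=\bigl\|\int_0^tS(-s)\Phi^{(1)}dW^{(1)}_s\bigr\|_{H^\sigma_x}$, so the supremum in time is really the supremum of (the norm of) a genuine $H^\sigma_x$-valued martingale. Where you diverge is in how you control that supremum. You apply Doob's $L^2$ maximal inequality to $\tilde{u}$ and then the It\^{o} isometry, obtaining the bound in two clean steps with the explicit constant $C=4$. The paper instead applies the It\^{o} formula to $\|\breve{u}_t\|_{H^\sigma_x}^2$, which produces the deterministic term $t\|\Phi^{(1)}\|^2_{L_2^{0,\sigma}}$ plus a cross-term martingale; that martingale is then estimated by the Burkholder--Davis--Gundy inequality and the resulting factor $\sup_t\|\breve{u}\|_{H^\sigma_x}$ is absorbed into the left-hand side by Young's inequality. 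Your route is shorter and arguably more transparent for this linear lemma; the paper's It\^{o}-formula-plus-BDG-plus-absorption template is the one that carries over verbatim to the nonlinear functionals $\mathcal{I}_t$ and $\mathcal{E}_t$ in the a priori estimate of Proposition 3.2, where no martingale factorization is available, which is presumably why the authors set the computation up that way here. Both proofs use the same ingredients to verify that the integrand is Hilbert--Schmidt uniformly in $s$, namely the unitarity of $S(-s)$, so there is no gap in your proposal.
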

	\begin{proof}
		By the unitary property of $S(t)$ and It\^{o} formula, we have
		\begin{equation*}
			\begin{array}{l}
				\|\hat{u}\|_{H^\sigma_x}^2
				=(J^\sigma\int_0^tS(-\tau)\Phi^{(1)} dW_{\tau}^{(1)},J^\sigma\int_0^tS(-\tau)\Phi^{(1)} dW_{\tau}^{(1)})\\
				\hspace{34pt}=2\int_0^t( J^\sigma\int_0^{\tau}S(-\iota)\Phi^{(1)} dW_{\iota}^{(1)}, J^\sigma S(-\tau)\Phi^{(1)} dW_{\tau}^{(1)} )\\
				\hspace{34pt}\quad+\sum_{i=0}^{\infty}\int_0^t (J^\sigma S(-\tau)\Phi^{(1)} e_i, J^\sigma S(-\tau)\Phi^{(1)} e_i) d\tau\\
				\hspace{34pt}=2\int_0^t( J^\sigma \int_0^{\tau}S(-\iota)\Phi^{(1)} dW_{\iota}^{(1)}, J^\sigma S(-\tau)\Phi^{(1)} dW_{\tau}^{(1)} )+t\|\Phi^{(1)}\|^2_{L^{0,\sigma}_2}.
			\end{array}
		\end{equation*}
		By the BDG inequality and Young inequality, we have
		\begin{equation*}
			\begin{array}{l}
				\quad\mathbb{E} \left(\sup_{t\in[0,T]}  \|\breve{u}\|_{H^\sigma_x}^2 \right)\\
				\leq  \mathbb{E}\left(C\sum_{i=0}^{\infty}\int_{0}^{T} ( J^\sigma\int_0^{\tau}S(-\iota)\Phi^{(1)} dW_{\iota}^{(1)}, J^\sigma S(-\tau)\Phi^{(1)}e_i )^2 d\tau\right)^{\frac{1}{2}}\\
				\quad+ T\|\Phi^{(1)}\|^2_{L^{0,\sigma}_2}\\
				=\mathbb{E}\left(C\sum_{i=0}^{\infty}\int_{0}^{T} ( J^\sigma\breve{u}(\tau), J^\sigma\Phi^{(1)}e_i )^2 d\tau\right)^{\frac{1}{2}} + T\|\Phi^{(1)}\|^2_{L^{0,\sigma}_2}\\
				\leq \frac{1}{2}\mathbb{E} \left(\sup_{t\in[0,T]}  \|\breve{u}\|_{H^\sigma_x}^2 \right)+ TC\|\Phi^{(1)}\|^2_{L^{0,\sigma}_2}	
			\end{array}
		\end{equation*}
		Thus, we finish the proof.
	\end{proof}
	
	\begin{rem}
		Using the arguments as Theorem 6.4 in \cite{dp}, it can be proved that $\breve{u} \in L^2(\Omega; C([0,T];H^\sigma(\mathbb{R})))$.
	\end{rem}	
	
	\begin{lemma}\label{lem:L_x^2L_T^infty}
		We have
		\begin{equation}\label{L_x^2L_T^infty}
			\mathbb{E}\left( \|\breve{u}\|_{L^2_xL^{\infty}_T}^2  \right) \leq C(T,s)\|\Phi^{(1)}\|_{L_2^{0,s}}^2
		\end{equation}
		for any $s>{1}/{2}$.
	\end{lemma}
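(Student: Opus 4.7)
The plan is to combine an in-time Sobolev embedding with a direct It\^o-isometry computation of the $L^2_x$-covariance. For each fixed $x$, the process $t\mapsto \breve{u}(t,x) = \sum_i \int_0^t [S(t-\tau)\Phi^{(1)} e_i^{(1)}](x)\,d\beta_i^{(1)}(\tau)$ is a centred Gaussian in the Hilbert space $L^2_x$, so the idea is to control its second-order modulus of continuity in $L^2_x$ and then promote this to a supremum-in-$t$ bound via a fractional Sobolev embedding.

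First I would invoke the embedding $W^{\sigma_0,p}([0,T]) \hookrightarrow L^\infty([0,T])$ with $\sigma_0 p>1$ to write
\begin{equation*}
\sup_{t\in[0,T]} |\breve{u}(t,x)|^p \lesssim \int_0^T |\breve{u}(t,x)|^p\,dt + \iint_{[0,T]^2} \frac{|\breve{u}(t,x)-\breve{u}(\tau,x)|^p}{|t-\tau|^{1+\sigma_0 p}}\,dt\,d\tau.
\end{equation*}
Raising both sides to the power $2/p$, integrating in $x$, and applying Minkowski's integral inequality (valid since $p\ge 2$) to exchange $L^2_x$ with the $L^p$ norms in the time variables, then taking expectation and using Jensen with the outer exponent $2/p\le 1$, I reduce matters to controlling
\begin{equation*}
\int_0^T \mathbb{E}\|\breve{u}(t)\|_{L^2_x}^p\,dt \quad\text{and}\quad \iint_{[0,T]^2}\frac{\mathbb{E}\|\breve{u}(t)-\breve{u}(\tau)\|_{L^2_x}^p}{|t-\tau|^{1+\sigma_0 p}}\,dt\,d\tau.
\end{equation*}
It\^o isometry gives $\mathbb{E}\|\breve{u}(t)\|_{L^2_x}^2 = t\|\Phi^{(1)}\|_{L_2^0}^2$, and splitting $\breve{u}(t)-\breve{u}(\tau) = \int_\tau^t S(t-\rho)\Phi^{(1)}dW_\rho^{(1)} + (S(t-\tau)-I)\breve{u}(\tau)$ together with the Fourier-side bound $|e^{-i\delta\xi^2}-1|^2 \lesssim (\delta\xi^2)^{2\alpha}$ for $0\le \alpha\le 1$ yields $\mathbb{E}\|\breve{u}(t)-\breve{u}(\tau)\|_{L^2_x}^2 \le C(T)\,|t-\tau|^{2\alpha}\|\Phi^{(1)}\|_{L_2^{0,s}}^2$ whenever $2\alpha\le \min(s,1)$. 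The Gaussian property of these increments in the Hilbert space $L^2_x$ upgrades each $L^2_\omega$-estimate to an $L^p_\omega$-estimate at the cost of a factor $C_p$, by the equivalence of Gaussian moments.

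The main obstacle will be balancing the three parameters. I need simultaneously (i) $\sigma_0 p>1$ for the Sobolev embedding, (ii) $2\alpha\le\min(s,1)$ for the Schr\"odinger-continuity bound, and (iii) $\alpha>\sigma_0$ so that the resulting Slobodeckij kernel $|t-\tau|^{\alpha p-1-\sigma_0 p}$ is integrable over $[0,T]^2$. For $s>1/2$ the interval $(1/p,\min(s,1)/2)$ is non-empty once $p>2/\min(s,1)$, so selecting any such $p$ and then $1/p<\sigma_0<\alpha\le\min(s,1)/2$ closes every condition simultaneously; combining the resulting pieces gives $\mathbb{E}\|\breve{u}\|_{L^2_x L^\infty_T}^2 \le C(T,s)\|\Phi^{(1)}\|_{L_2^{0,s}}^2$, as claimed.
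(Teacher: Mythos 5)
Your argument has a genuine gap at the Minkowski step, and it is fatal to the whole strategy. After integrating the in-time Sobolev embedding over $x$, you need to pass from $\|\breve{u}\|_{L_x^2L_T^p}$ to $\|\breve{u}\|_{L_T^pL_x^2}$, i.e.\ you need
\begin{equation*}
\Big(\int_{\mathbb{R}}\Big(\int_0^T|\breve{u}(t,x)|^p\,dt\Big)^{2/p}dx\Big)^{1/2}\ \le\ \Big(\int_0^T\|\breve{u}(t)\|_{L_x^2}^p\,dt\Big)^{1/p}.
\end{equation*}
Minkowski's integral inequality gives exactly the reverse: for $p\ge 2$ one has $\|f\|_{L_T^pL_x^2}\le\|f\|_{L_x^2L_T^p}$ (the mixed norm with the smaller exponent on the outside is the larger one), so the interchange you invoke is false in the direction you need, and the condition $p\ge 2$ that you cite is precisely the regime in which it fails. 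The same problem affects the Slobodeckij term. What your scheme actually controls, once the inequality is used in its correct direction, is $\mathbb{E}\|\breve{u}\|^2_{L_T^\infty L_x^2}$ --- essentially the content of Lemma \ref{lem:Linfty_TH^s_x} with $\sigma=0$ --- and since $\|\cdot\|_{L_x^2L_T^\infty}\ge\|\cdot\|_{L_T^\infty L_x^2}$, nothing about the maximal norm in \eqref{L_x^2L_T^infty} follows.

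A quick sanity check confirms that something must be wrong: your three parameter conditions ($\sigma_0 p>1$, $2\alpha\le\min(s,1)$, $\alpha>\sigma_0$) are simultaneously satisfiable for \emph{every} $s>0$, so your proof, if valid, would establish \eqref{L_x^2L_T^infty} for all $s>0$. But the hypothesis $s>1/2$ is not decorative: it is the threshold (sharp up to the endpoint) for the deterministic local maximal function estimate $\|S(t)\phi\|_{L_x^2L_{[0,T]}^\infty}\lesssim\|\phi\|_{H^s_x}$, and the proof the paper points to (Proposition 3.2 of \cite{KdVH1}) runs the factorization method through exactly that deterministic estimate: the stochastic convolution is written as a time integral of free evolutions of an auxiliary process, the $L_x^2L_T^\infty$ norm is taken first using the deterministic maximal estimate pointwise in the factorization parameter, and only afterwards does one use H\"older in time and Gaussian moment bounds. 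Any correct proof must obtain pointwise-in-$x$ control of $\sup_t$ \emph{before} integrating in $x$; your reduction to the second moments $\mathbb{E}\|\breve{u}(t)-\breve{u}(\tau)\|_{L_x^2}^2$ discards precisely the information that the maximal function estimate supplies.
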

	Although the proof of Lemma \ref{lem:L_x^2L_T^infty} is complex, it is similar to the Proposition 3.2 in \cite{KdVH1}. We omit it. 
	
	\section{Priori Estimate}\label{chapterpriori}
	In this section, for \eqref{smodel}, we firstly show the local well-posedness by a pathwise fixed point argument in $X_{1}(T(\omega))$. Then, we construct approximation equations of the \eqref{smodel}. Finally, supposing the solutions of the approximation equations are smooth enough, we get a uniform estimate of these equations.

	To prove the local well-posedness of \eqref{smodel} in $X_{1}(T(\omega))$, we introduce the following notations.
	
	For any $d,T>0$ and $\sigma\in \mathbb{N^+}$, let
	$$ B_d^\sigma(T)=\{{(u,v)\in X_\sigma(T): \|(u,v)\|_{X_\sigma(T)}}\leq d\}.$$
	For any $(u_0,v_0)$ satisfying the conditions of Theorem \ref{thm:main}, the random mapping $\Psi (\cdot,\cdot)(\omega)$ from $(\mu,\nu)\in X_\sigma(T(\omega))$ to $ \Psi (\mu,\nu) \in X_\sigma(T(\omega)) $ is defined as follow:
	\begin{equation*}
		\begin{aligned}
			&\quad\Psi ((\mu,\nu))(\omega) \\
			&=\left(\Psi^1 ((\mu,\nu)),\Psi^2 ((\mu,\nu))\right)\\
			&=(S(t)u_0, U(t)v_0) +\int_0^t(S(t-s)N_1((\mu,\nu)),U(t-s)N_2((\mu,\nu))) ds + \breve{w}(\omega).
		\end{aligned}
	\end{equation*}
	
	According to Proposition \ref{prop:linear} and Lemma \ref{lem:workspace}, it is clear that for any $d,T>0$, we have
	\begin{align*}
		\|\Psi(u,v)(\omega) \|_{X_{1}(T)}
		&\leq C(T)(\|u_0(\omega)\|_{H^{1}_x}+\|v_0(\omega)\|_{H^{1}_x}) + C(T)\|\breve{w}(\omega)\|_{X_{1}(T)}\\
		&+ C(T,\gamma_1,\gamma_2,\beta) T^{\frac{1}{2}}\left(\|(u,v)\|^2_{X_{1}(T)}+\|(u,v)\|^3_{X_{1}(T)}\right)
		\ a.s.\mathbb{P}
	\end{align*}
	and 
	\begin{equation*}
		\begin{aligned}
			&\quad\|\Psi(u,v)(\omega) -\Psi(\tilde{u},\tilde{v})(\omega) \|_{X_{1}(T)} \\
			&\leq   \tilde{C}(T,\gamma_1,\gamma_2,\beta) T^{\frac{1}{2}} (\|(u-\tilde{u},v-\tilde{v})\|_{X_{1}(T)} (\|(u,v)\|_{X_{1}(T)}+\|(\tilde{u},\tilde{v})\|_{X_{1}(T)})\\
			& \ \ \ \ \ \ \ \ \ \ \ \  +\|(u-\tilde{u},v-\tilde{v})\|_{X_{1}(T)}\cdot (\|(u,v)\|^2_{X_{1}(T)}+\|(\tilde{u},\tilde{v})\|^2_{X_{1}(T)})) \ a.s.\mathbb{P}.
		\end{aligned}
	\end{equation*}
	Here, $C(T)$, $ C(T,\gamma_1,\gamma_2,\beta)$, $\tilde{C}(T,\gamma_1,\gamma_2,\beta)$ are non-decreasing with respect to $T$. Thus, as long as we choose
	$$
	d=2C(T)(\|u_0(\omega)\|_{H^{1}_x}+\|v_0(\omega)\|_{H^{1}_x}+\|\breve{w}(\omega)\|_{X_{1}(T)})
	$$
	and
	$$T(\omega)= \left(8C(T,\gamma_1,\gamma_2,\beta) (d+d^2)\right)^{-2} \wedge \left(8\tilde{C}(T,\gamma_1,\gamma_2,\beta)(d^2+d)\right)^{-2},$$
	which is strictly larger than $0$,
	$\Psi (\cdot,\cdot)(\omega)$ is bounded and contracting on $ B_{d}^{1}(T(\omega)) $ in $[0, T(\omega)]$ almost surely.
	
	Therefore, $\Psi(\cdot,\cdot)(\omega)$ has a unique fixed point in $ B_{d}^{1}(T(\omega)) $  almost surely. Here, the existence interval of the solution relies on $\omega$. 
	
	To extend the solution to the common interval $[0, T]$, we need a $H_x^1$ priori estimate of $(u,v)$. However, because in the  proof of the high regular well-posedness of the forced system will appear products of forces and $u,v,\partial_x u,\partial_xv$, it is difficult to get the priori estimate for $(u,v)$ pathwisely. Thus, we need to construct a sequence of approximation equations.
	
	We do the following smoothing treatments of noises and initial values.
	Let
	$P_m = \mathscr{F}^{-1}\chi_{[-m,m]}(\xi)\mathscr{F}$, 
	and
	$$\{\Phi_m\}_{m\in\mathbb{N}^+} = \{(\Phi^{(1)}_m , \Phi^{(2)}_m) \}_{m\in\mathbb{N}^+}=\{(P_m\Phi^{(1)},P_m\Phi^{(2)})\}_{m\in\mathbb{N^+}} \subset L^{0,2}_2\times L^{0,3}_2.$$
	It is clear that
	$$ (\Phi^{(1)}_m, \Phi^{(2)}_m) \longrightarrow (\Phi^{(1)} , \Phi^{(2)}) \ \ \text{in} \  \ L_2^{0,1} \times L_2^{0,1},\quad m\rightarrow \infty.$$ 	
	Let $\{(u_{m}(0),v_{m}(0))\}_{m\in\mathbb{N}^+}=\{(P_mu_{0},P_mv_{0})\}_{n\in\mathbb{N}^+} $ be a sequence in $H^2(\mathbb{R})\times H^3(\mathbb{R}) $ almost surely. By the Sobolev embedding, the Carleson-Hunt theorem in \cite{mordenFA} and the dominated convergence theorem, one has
	$$u_{m}(0)\longrightarrow u_0 \ \ \text{in} \ \  L^2(\Omega; H^{1}(\mathbb{R})) \cap L^4(\Omega;L^4(\mathbb{R}))\cap L^{10}(\Omega;L^2(\mathbb{R})),
	$$
	$$
	v_{m}(0)\longrightarrow v_0  \  \ \text{in}  \  \  L^2(\Omega; H^{1}(\mathbb{R})) \cap L^{3}(\Omega;L^3(\mathbb{R})). $$

We first introduce the approximation equations with high regularity noises and initial datum:
\begin{equation}\label{smoothinitialandnoise}
	\left\{
		\begin{array}{rl}
			\begin{aligned}
				&d u_m=i\partial_{xx}u_mdt -i(\gamma_1 u_mv_m+\beta |u_m|^2u_m)dt + \Phi_m^{(1)} dW_t^{(1)},\\
				&d v_m=-\partial_{xxx}v_mdt+  \partial_x(\gamma_2|u_m|^2-v_m^2/2)dt+ \Phi_m^{(2)}dW_t^{(2)},\\
				&u_m(0)=P_mu_0,\  v_m(0)=P_mv_0.
			\end{aligned}
		\end{array}
	\right.
\end{equation}
	
	Then, we propose the following approximation equations of \eqref{smoothinitialandnoise}, whose paths will be proved almost surely in $C([0,T];H^2_x\times H^3_x)$ and have enough conservation laws at the same time. 
	\begin{equation}\label{approxila}
		\left\{
		\begin{array}{ll}
			d u_{m,n,K}=&i\partial_{xx}u_{m,n,K}dt -i\gamma_1 \psi_K(|u_{m,n,K}|^2)u_{m,n,K}v_{m,n,K}dt\\
			&-i\beta \varphi_K(|u_{m,n,K}|^2)|u_{m,n,K}|^2u_{m,n,K}dt + \Phi_m^{(1)} dW_t^{(1)},\\
			d v_{m,n,K}=&-\partial_{xxx}v_{m,n,K}dt+  P_n\partial_x(\gamma_2\varphi_K(|u_{m,n,K}|^2)|u_{m,n,K}|^2)dt\\
			&-\frac{1}{2}P_n\partial_{x}(\varphi_K(v_{m,n,K})v_{m,n,K}^2)dt+ \Phi_m^{(2)}dW_t^{(2)},\\
			u_{m,n,K}(0)=&u_m(0),\  v_{m,n,k}(0)=v_m(0).
		\end{array}
		\right.
	\end{equation}
	In the whole paper, $n,K\in \mathbb{N^+}$, $n\geq m$. $\varphi\in C_0^\infty$ is a real cut-off function satisfying  $\varphi|_{[-1,1]} = 1$. $\varphi_K(x) = \varphi(x/K)$, $\psi_K(x) = x\varphi_K'(x)+\varphi_K(x)$. In Proposition \ref{prop:priori_esti}, we will use the fact that $v_{m,n,K}=P_nv_{m,n,K}$.
	
	\begin{prop}\label{prop:priori_esti}
		For any $T>0$, suppose that $\{(u_{m,n,K},v_{m,n,K})\}_{m,n\in\mathbb{N^+}}$ are in $C([0,T];H^2_x\times H^3_x)$ almost surely. Then, under conditions of Theorem \ref{thm:main},  $\{(u_{m,n,K},v_{m,n,K})\}_{m,n\in\mathbb{N^+}}$ are bounded in $L^2(\Omega;L^\infty(0,T;\mathcal{H}^1(\mathbb{R})))$ and have the estimate \eqref{priori_esti_finish}, which does not depend on $m,n,K$.
	\end{prop}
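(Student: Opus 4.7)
The plan is to derive the bound by applying It\^o's formula to three functionals that, in the deterministic analogue, are (or can be made) conserved quantities of the cutoff system: the mass $\|u\|_{L^2}^2$, a mixed momentum
$$\mathcal{P}(u,v) = \mathrm{Im}\!\int u\,\overline{\partial_x u}\,dx + \tfrac{\gamma_1}{2\gamma_2}\|v\|_{L^2}^2,$$
and a modified Hamiltonian $E_K$ built out of the cutoffs. The relation $\psi_K(s) = (s\varphi_K(s))'$ is exactly what makes $\tfrac{\delta}{\delta\bar u}\!\int v\,\varphi_K(|u|^2)|u|^2\,dx = v\,\psi_K(|u|^2)\,u$, giving the cutoff Schr\"odinger nonlinearity a Hamiltonian ancestor compatible with the KdV side. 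Likewise, since $v_{m,n,K}=P_n v_{m,n,K}$ and $P_n$ is self-adjoint, the projection $P_n$ disappears inside all inner products with $v$ or $\partial_x v$ that we need, so the integration-by-parts identities from the deterministic S-KdV transfer verbatim; in particular one has the telescoping $\int v\,\partial_x(\varphi_K(v)v^2)\,dx = 0$.

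I would first apply It\^o's formula to $\|u_{m,n,K}\|_{L^2}^2$. Each nonlinear term in the $u$-equation has the form $-i\cdot(\text{real})\cdot u$, so its pairing with $\bar u$ is purely imaginary and contributes nothing to the drift. What remains is the stochastic integral and the It\^o correction $\|\Phi_m^{(1)}\|_{L_2^{0,0}}^2$, so BDG plus Gronwall give a uniform bound on $\mathbb{E}\sup_{t\le T}\|u\|_{L^2}^2$. Iterating with $\|u\|_{L^2}^{2p}$ for $p=2,5$ upgrades this to higher moments -- this is the role of the $L^{10}(\Omega;L^2)$ hypothesis on $u_0$. Next I would apply It\^o to $\mathcal{P}$. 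In the deterministic cutoff system the identity
$$\int v\,\psi_K(|u|^2)\,\partial_x|u|^2\,dx = -\!\int v_x\,|u|^2\varphi_K(|u|^2)\,dx$$
combined with $v = P_n v$ makes the cubic-in-$u$ terms cancel exactly, so the drift of $\mathcal{P}$ vanishes; the It\^o correction is controlled by $\|\Phi^{(1)}\|_{L_2^{0,1}}^2 + \|\Phi^{(2)}\|_{L_2^{0,0}}^2$. Here $\gamma_1\gamma_2>0$ is essential: it makes $\tfrac{\gamma_1}{2\gamma_2}>0$, so $\mathcal{P}$ together with the elementary $|\mathrm{Im}\!\int u\bar u_x|\le \tfrac12\|u\|_{L^2}^2 + \tfrac12\|\partial_x u\|_{L^2}^2$ yields a bound on $\|v\|_{L^2}^2$ modulo a small multiple of $\|\partial_x u\|_{L^2}^2$ that will be absorbed in the energy step.

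Finally I would apply It\^o to the modified energy
$$E_K = \gamma_2\|\partial_x u\|_{L^2}^2 + \tfrac{\gamma_1}{2}\|\partial_x v\|_{L^2}^2 + \gamma_1\gamma_2\!\int v\,\varphi_K(|u|^2)|u|^2\,dx - \tfrac{\gamma_1}{6}\!\int v^3\varphi_K(v)\,dx + \tfrac{\gamma_2\beta}{2}\!\int \Theta_K(|u|^2)\,dx,$$
where $\Theta_K'(s) = s\,\varphi_K(s)$; the deterministic drift vanishes because $E_K$ is the Hamiltonian of the cutoff system. The It\^o correction is bounded by $\|\Phi^{(k)}\|_{L_2^{0,1}}^2$ times polynomial expressions in $\|u\|_{L^2}, \|v\|_{L^2}, \|\partial_x u\|_{L^2}, \|\partial_x v\|_{L^2}$, with the cutoff derivatives entering only through the $K$-uniform bounds $\|\varphi_K^{(j)}\|_{L^\infty}\lesssim K^{-j}$. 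The lower-order pieces of $E_K$ are absorbed into the coercive part via Gagliardo--Nirenberg (for instance $\|u\|_{L^4}^4\lesssim\|u\|_{L^2}^3\|\partial_x u\|_{L^2}$, $\|v\|_{L^3}^3\lesssim\|v\|_{L^2}^{5/2}\|\partial_x v\|_{L^2}^{1/2}$, $|\!\int v|u|^2|\lesssim\|v\|_{L^2}\|u\|_{L^2}^{3/2}\|\partial_x u\|_{L^2}^{1/2}$) and Young's inequality, using the previously obtained $L^2$-moment bounds; the hypotheses $u_0\in L^4(\Omega;L^4)$ and $v_0\in L^3(\Omega;L^3)$ are precisely what make $\mathbb{E}\,E_K(0)$ finite uniformly in $m$. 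BDG on the martingale part and Gronwall then close the estimate and yield the bound \eqref{priori_esti_finish}, independent of $m,n,K$.

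The main obstacle I foresee is the exact cancellation of the drift of $E_K$: it requires matching, term by term, the variational derivatives of $E_K$ against the cutoff nonlinearities in \eqref{approxila} and carefully moving each $P_n$ factor across integration by parts by using $v_{m,n,K} = P_n v_{m,n,K}$ and the self-adjointness of $P_n$. A secondary technical point is keeping the It\^o correction $K$-uniform: the second variation of $E_K$ applied to $(\Phi_m^{(k)}e_i, \Phi_m^{(k)}e_i)$ generates terms involving $\varphi_K', \psi_K'$ times $|u|^p$ and $v^q$ for various $p,q$, and closing the resulting Gronwall inequality in $L^2_\omega$ is exactly what forces the higher-moment assumptions on $u_0$ and $v_0$ in Theorem~\ref{thm:main}.
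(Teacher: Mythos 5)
Your proposal follows essentially the same route as the paper: It\^o's formula applied to $\|u\|_{L^2}^{2k}$ (with $k=1,3,5$ to exploit the $L^{10}(\Omega;L^2)$ hypothesis), to the momentum functional $\mathcal{I}_t=\int\mathrm{Im}(u\partial_x\bar u)+\frac{\gamma_1}{2\gamma_2}v^2\,dx$, and to a cutoff-modified energy whose drift vanishes by construction, followed by the same Gagliardo--Nirenberg interpolations and BDG to close the estimate. The only slip is in your cubic term $-\frac{\gamma_1}{6}\int v^3\varphi_K(v)$, whose variational derivative picks up an extra $v^3\varphi_K'(v)$ piece; the paper instead uses the exact antiderivative $\psi_{2,K}(v)=\int_0^v s^2\varphi_K(s)\,ds$ (the same device you correctly apply via $\Theta_K$ for the quartic term), which is what makes the deterministic drift cancel exactly.
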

	\begin{proof}
		For any $k\in\mathbb{N^+}, t\geq0$, we apply It\^{o} formula to 
		\begin{equation}\label{u^k_n_L_x^2}
			\begin{aligned}
				&\quad\|u_{m,n,K}\|^{2k}_{L_x^2}\\
				&=\|u_{m}(0)\|^{2k}_{L_x^2}+k\int_{0}^{t} \|u_{m,n,K}\|_{L_x^2}^{2k-2}\left(u_{m,n,K},\Phi_m^{(1)}dW_s^{(1)}\right)\\
				&\quad+k \int_{0}^{t} \|u_{m,n,K}\|_{L_x^2}^{2k-2} \left(\bar{u}_{m,n,K},\Phi_{m}^{(1)}dW_s^{(1)}\right)\\
				&\quad+k\|\Phi_{m}^{(1)}\|^2_{L_2^{0,0}}\int_{0}^{t} \|u_{m,n,K}\|_{L_x^2}^{2(k-1)} ds\\
				&\quad+k(k-1)\int_{0}^{t} \|u_{m,n,K}\|_{L_x^2}^{2(k-2)}\Bigg(\left(u_{m,n,K},\Phi_{m}^{(1)}e_i\right)\left(\bar{u}_{m,n,K},\Phi_m^{(1)}e_i\right)\\
				&\qquad\qquad\hspace{40pt}+\frac{1}{2}\left(\left(u_{m,n,K},\Phi_m^{(1)}e_i\right)^2+ \left(\bar{u}_{m,n,K},\Phi_m^{(1)}e_i\right)^2\right)\Bigg) ds.
			\end{aligned}
		\end{equation}
		It follows by BDG's inequality that
		\begin{equation}\label{max_u^k_n_L_x^2}
			\mathbb{E} \Big(\sup_{t\in[0,T]}|u_{m,n,K}|^{2k}_{L_x^2}\Big)\leq C(k)\mathbb{E}(|u_m(0)|^{2k}_{L_x^2})+C(k)T^{k} \|\Phi_m^{(1)}\|^{2k}_{L_2^{0,0}},
		\end{equation}
		for any $T>0$.
		
		Let
		\begin{equation*}
			\mathcal{I}_t(u_{m,n,K},v_{m,n,K}):=\int_{\mathbb{R}} \text{Im}(u_{m,n,K}\partial_x\bar{u}_{m,n,K}) + \frac{\gamma_1}{2\gamma_2}v_{m,n,K}^2 dx.
		\end{equation*}
		Similarly, by It\^{o} formula, one has
		\begin{equation}\label{I(u,v)}
			\begin{aligned}
				&\quad\mathcal{I}_t(u_{m,n,K},v_{m,n,K})\\ &=i\int_{0}^{t}\left(\partial_x(u_{m,n,K}-\bar{u}_{m,n,K}),\Phi^{(1)}_mdW_s^{(1)}\right)+t\sum_{i=0}^{\infty}\left(\Phi_m^{(1)} e_i,\partial_{x} \Phi_m^{(1)}e_i\right)\\
				&\quad+\frac{\gamma_1}{2\gamma_2}t\|\Phi_m^{(2)}\|^2_{L_2^{0,0}}+\frac{\gamma_1}{\gamma_2}\int_{0}^{t} \left(v_{m,n,K},\Phi_m^{(2)}dW_s^{(2)}\right).
			\end{aligned}
		\end{equation}
		What's more, we consider 
		\begin{equation*}
			\begin{aligned}
				&\quad\mathcal{E}_t(u_{m,n,K},v_{m,n,K})\\
				&=\int_{\mathbb{R}}|\partial_x u_{m,n,K}|^2+\frac{\gamma_1}{2\gamma_2}(|\partial_x v_{m,n,K}|^2-\psi_{2,K}(v_{m,n,K}))\\
				&\quad+\gamma_1\varphi_K(|u_{m,n,K}|^2)|u_{m,n,K}|^2v_{m,n,K}
				+\beta \psi_{1,K}(|u_{m,n,K}|^2)~dx,
			\end{aligned}
		\end{equation*}
		where $$\psi_{1,K}(x) =  \int_0^x s\varphi_K(s)~ds,\quad\psi_{2,K}(x) = \int_0^x s^2\varphi_K(s)~ds.$$
		By It\^{o} formula and the cut-off property of $\varphi_K(\cdot)$, it follows
		\begin{equation}\label{E(u,v)}
			\begin{array}{l}
				\quad\mathcal{E}_t(u_{m,n,K},v_{m,n,K})
				\\
				=\int_{\mathbb{R}} \left| \frac{\partial u_m(0)}{\partial x} \right| ^2 dx+\frac{\gamma_1}{2\gamma_2}	\int_{\mathbb{R}} \left(\left|\frac{\partial v_m(0)}{\partial x} \right| ^2-\psi_{2,K}(v_m(0))\right) dx
				\\
				+\gamma_1\int_{\mathbb{R}} \varphi_K(|u_m(0)|^2)|u_m(0)|^2v_m(0) dx+\beta\int_{\mathbb{R}} \psi_{1,K}(|u_m(0)|^2) dx
				\\
				+t\|\Phi_m^{(1)}\|^2_{L_2^{0,1}} +\int_{0}^{t} \left( \frac{\partial u_{m,n,K}}{\partial x}+\frac{\partial \bar{u}_{m,n,K}}{\partial x}, \frac{\partial}{\partial x} \Phi^{(1)}_m dW^{(1)}_s \right)+\frac{t\gamma_1}{2\gamma_2} \|\Phi_m^{(2)}\|^2_{L_2^{0,1}}
				\\
				+\frac{\gamma_1}{\gamma_2}\int_{0}^{t} \left(\frac{\partial v_{m,n,K}}{\partial x},\frac{\partial  }{\partial x}\Phi_m^{(2)}dW_s^{(2)}\right)
				\\
				+\beta\int_{0}^{t}\left(|u_{m,n,K}|^2u_{m,n,K}+|u_{m,n,K}|^2\bar{u}_{m,n,K},\varphi_K(|u_{m,n,K}|^2)\Phi_m^{(1)}dW_s^{(1)}\right)
				 \\
				+\frac{\beta}{2}\sum_{i=0}^{\infty}\int_{0}^{t} \left(\left(u_{m,n,K}+\bar{u}_{m,n,K}\right)^2\varphi_K(|u_{m,n,K}|^2)\Phi_m^{(1)}e_i,\Phi_m^{(1)}e_i\right) ds
				\\
				+\frac{\beta}{2}\sum_{i=0}^{\infty}\int_{0}^{t} \left(\left(u_{m,n,K}+\bar{u}_{m,n,K}\right)^2|u_{m,n,K}|^2\varphi'_K(|u_{m,n,K}|^2)\Phi_{m}^{(1)}e_i,\Phi_m^{(1)}e_i\right) ds
				\\
				+\beta\sum_{i=0}^{\infty}\int_{0}^{t} \left(|u_{m,n,K}|^2\varphi_K(|u_{m,n,K}|^2)\Phi_m^{(1)}e_i,\Phi_m^{(1)}e_i\right) ds
				\\
				+\gamma_1\int_{0}^{t}\left(\varphi_K(|u_{m,n,K}|^2)v_{m,n,K}(\bar{u}_{m,n,K}+u_{m,n,K}),\Phi_m^{(1)}dW_s^{(1)}\right)
				\\
				+\gamma_1\int_{0}^{t}\left(\varphi'_K(|u_{m,n,K}|^2)|u_{m,n,K}|^2v_{m,n,K}(\bar{u}_{m,n,K}+u_{m,n,K}),\Phi_m^{(1)}dW_s^{(1)}\right)
				\\
				+\gamma_1\int_{0}^{t} \left(\varphi_K(|u_{m,n,K}|^2)|u_{m,n,K}|^2,\Phi_m^{(2)}dW_s^{(2)}\right)
				\\
				+\gamma_1\sum_{i=0}^{\infty}\int_0^t  \left(v_{m,n,K}\varphi_K(|u_{m,n,K}|^2)\Phi_{m}^{(1)}e_i,\Phi_m^{(1)}e_i\right) ds
				\\
				+\frac{\gamma_1}{2}\sum_{i=0}^{\infty}\int_0^t  \left(v_{m,n,K}\varphi'_K(|u_{m,n,K}|^2)\Phi_m^{(1)}e_i(u_{m,n,K}+\bar{u}_{m,n,K})^2,\Phi_m^{(1)}e_i\right) ds
				\\
				+\gamma_1\sum_{i=0}^{\infty}\int_0^t  \left(v_{m,n,K}|u_{m,n,K}|^2\varphi'_K(|u_{m,n,K}|^2)\Phi_{m}^{(1)}e_i,\Phi_{m}^{(1)}e_i\right) ds
				\\				+2\gamma_1\sum_{i=0}^{\infty}\int_0^t  \Big(v_{m,n,K}|u_{m,n,K}|^2\varphi''_K(|u_{m,n,K}|^2)\Phi_{m}^{(1)}e_i\mathrm{Re}(u_{m,n,K})^2,\Phi_m^{(1)}e_i\Big) ds
				\\				-\frac{\gamma_1}{2\gamma_2}\int_{0}^{t} \left(\varphi_K(v_{m,n,K})v_{m,n,K}^2,\Phi^{(2)}_mdW^{(2)}_s\right)
				\\				-\frac{\gamma_1}{4\gamma_2} \sum_{i=0}^{\infty} \int_{0}^{t}\left( 2v_{m,n,K}\varphi_K(v_{m,n,K})+v_{m,n,K}^2\varphi'_K(v_{m,n,K}) , (\Phi_m^{(2)}e_i)^2 \right) ds.
			\end{array}
		\end{equation}
		Now we would use $ \|u_{m,n,K}\|^{2k}_{L_x^2},\  \mathcal{I}_t(u_{m,n,K},v_{m,n,K}),\  \mathcal{E}_t(u_{m,n,K},v_{m,n,K})$ to construct an upper bound of $\|u_{m,n,K}\|^2_{H_x^1}+\|v_{m,n,K}\|^2_{H_x^1}$.
		
		Note that 
		$$ \int_{\mathbb{R}} \psi_{1,K}(|u_{m,n,K}|^2) ~dx\leq \frac{1}{2}\|u_{m,n,K}\|^4_{L_x^4 },~~ \int_{\mathbb{R}} \psi_{2,K}(v_{m,n,K}) ~dx\leq \frac{1}{3}\|v_{m,n,K}\|^3_{L_x^3}.$$
		According to Littlewood-Paley square function theorem, Minkowski inequality and Bernstein inequality, we have  $\|v_{m,n,K}\|^3_{L_x^3} \leq C\|v_{m,n,K}\|^3_{\dot{H}_x^{{1}/{6}}}$. Then by H\"{o}lder inequality, the definition of $\mathcal{I}_t$, Solobev inequality and Young inequality, we have
		\begin{align*}
			&\quad\|v_{m,n,K}\|_{L_x^3}^3\lesssim \|v_{m,n,K}\|_{\dot{H}_x^\frac{1}{6}}^3\lesssim \|v_{m,n,K}\|_{L_x^2}^\frac{5}{2}\|v_{m,n,K}\|_{\dot{H}_x^1}^\frac{1}{2}\\
			&\lesssim (I(u_{m,n,K},v_{m,n,K})^\frac{5}{4}+C\|u_{m,n,K}\|_{H_x^\frac{1}{2}}^\frac{5}{2})\|v_{m,n,K}\|_{\dot{H}_x^1}^\frac{1}{2},\\
			&\quad\int_{\mathbb{R}} \varphi_K(|u_{m,n,K}|^2)v_{m,n,K}|u_{m,n,K}|^2 dx\\
			&\leq \|u_{m,n,K}\|^2_{L_x^4}\|v_{m,n,K}\|_{L_x^2}\lesssim \|u_{m,n,K}\|^{\frac{3}{2}}_{L_x^2}\|u_{m,n,K}\|^{\frac{1}{2}}_{H_x^{1}} \|v_{m,n,K}\|_{L_x^2}\\
			&\leq C_\varepsilon \|u_{m,n,K}\|^{6}_{L_x^2}+ \varepsilon (\|u_{m,n,K}\|^{2}_{H_x^{1}}+ \|v_{m,n,K}\|^2_{L_x^2}) 
		\end{align*}
		and
		\begin{align*}
			\|u_{m,n,K}\|_{L_x^4}^4&\lesssim \|u_{m,n,K}\|_{H_x^\frac{1}{4}}^4\lesssim \|u_{m,n,K}\|_{L_x^2}^3\|u_{m,n,K}\|_{H_x^1}\\
			&\leq C_\varepsilon\|u_{m,n,K}\|_{L_x^2}^6 +\varepsilon \|u_{m,n,K}\|^2_{H_x^1}.
		\end{align*}
		Furthermore, by H\"{o}lder inequality, we have
		\begin{align*}
			\|u_{m,n,K}\|_{H_x^\frac{1}{2}}^\frac{5}{2}\|v_{m,n,K}\|_{\dot{H}_x^1}^\frac{1}{2}&\leq \|u_{m,n,K}\|_{L_x^2}^\frac{5}{4}\|u_{m,n,K}\|_{H_x^1}^\frac{5}{4}\|v_{m,n,K}\|_{\dot{H}_x^1}^\frac{1}{2}\\
			&\leq C_\varepsilon\|u_{m,n,K}\|_{L_x^2}^{10}+\varepsilon\|u_{m,n,K}\|_{H_x^1}^2+\varepsilon\|v_{m,n,K}\|_{\dot{H}_x^1}^2.
		\end{align*}
		Thus, for a fixed $T>0$, there exists $C_1, C_2>0$ such that
		\begin{equation}\label{u_nv_nH^1_esti}
			\begin{array}{rl}
				&\left(\frac{\gamma_1}{4\gamma_2} \wedge \frac{1}{2}\right)(\|u\|_{H_x^1}^2+\|v\|_{H_x^1}^2)\\
				\leq&C_1(\|u_{m,n,K}\|^{2}_{L_x^2}+\|u_{m,n,K}\|^{6}_{L_x^2}+	\|u_{m,n,K}\|^{10}_{L_x^2})+\mathcal{I}_t(u_{m,n,K},v_{m,n,K})\\
				&+C_2\mathcal{I}_t (u_{m,n,K},v_{m,n,K})^\frac{5}{3}+\mathcal{E}_t(u_{m,n,K},v_{m,n,K}),
			\end{array}
		\end{equation}
		for any $t \in [0, T]$.
		
		For the sake of simplicity, let $\mathcal{Q}_t(u_{m,n,K},v_{m,n,K})$ be the right side of \eqref{u_nv_nH^1_esti}. By \eqref{max_u^k_n_L_x^2}--\eqref{E(u,v)} and BDG's inequality, there exist positive constants from $C_3$ to $C_{14}$, which are not depend on $m,n,k$ such that
		\begin{equation}\label{sup_esti}
			\begin{array}{l}
				\quad\mathbb{E} \sup_{t\in[0,T]} {Q}_t(u_{m,n,K},v_{m,n,K})\\
				\leq
				\sum_{i=1,3,5}C_3 (\mathbb{E}(|u_{m}(0)|^{2i}_{L_x^2})+T^{i} \|\Phi_{m}^{(1)}\|^{2i}_{L_2^{0,0}})\\
				+C_3(\mathbb{E}(|u_m(0)|^{2}_{L_x^2})+T \|\Phi_m^{(1)}\|^{2}_{L_2^{0,0}}+T \|\Phi_m^{(1)}\|^{2}_{L_2^{0,1}})\\
				+\varepsilon\mathbb{E} \sup_{t\in[0,T]}|v_{m,n,K}|^2+C_4T\|\Phi_m^{(2)}\|^2_{L^{0,1}_2}+\frac{1}{2}T\|\Phi_m^{(2)}\|^2_{L_2^{0,0}}\\
				+C_5 ((T\|\Phi_m^{(2)}\|^2_{L_2^{0,1}})^{\frac{5}{3}}+\mathbb{E}|u_m(0)|^2_{L_x^2} +T\|\Phi_m^{(1)}\|^2_{L_2^{0,0}}+T^5\|\Phi^{(2)}_m\|^{10}_{L_2^{0,1}})\\
				+ \varepsilon\mathbb{E} \sup_{t\in[0,T]} |v_{m,n,K}|^2_{L_x^2}+C_6T^5\|\Phi^{(2)}_m\|^{10}_{L_2^{0,0}}\\
				+\mathbb{E}\left| {\partial_x u_m(0)} \right| ^2_{L_x^2} +\frac{\gamma_1}{2\gamma_2}	\mathbb{E}\left| {\partial_x v_m(0)} \right| ^2_{L_x^2} +\frac{\beta}{2}\mathbb{E} |u_m(0)|^4_{L_x^4}\\
				+\gamma_1\mathbb{E}\int_{\mathbb{R}} |v_m(0)||u_m(0)|^2  dx  \\
				+\frac{\gamma_1}{6\gamma_2}\mathbb{E}\| v_m(0)\|^3_{L_x^3} +T\|\Phi_m^{(1)}\|^2_{L_2^{0,1}} \\
				+\varepsilon\mathbb{E} \sup_{t\in[0,T]} |\partial_x u_{m,n,K}|^2_{L_x^2}+C_7T\|\Phi_m^{(1)}\|^2_{L_2^{0,1}}\\
				+\frac{\gamma_1}{2\gamma_2}T \|\Phi_m^{(2)}\|^2_{L_2^{0,1}}+\varepsilon\mathbb{E} \sup_{t\in[0,T]} |\partial_x v_{m,n,K}|^2_{L_x^2}+C_8T\|\Phi_m^{(2)}\|^2_{L_2^{0,1}}\\
				+C_9 T\|\Phi_m^{(1)}\|^2_{L_2^{0,1}}+\varepsilon\mathbb{E} \sup_{t\in[0,T]} |\partial_xu_{m,n,K}|^2_{L_x^2}\\
				+\varepsilon(\mathbb{E}(|u_m(0)|^{10}_{L_x^2})+T^{5} \|\Phi_m^{(1)}\|^{10}_{L_2^{0,0}})\\
				+ 3C_{10} T\|\Phi_m^{(1)}\|^{2}_{L_2^{0,1}}(\mathbb{E}(|u_m(0)|^{2}_{L_x^2})+T \|\Phi_{m,n,K}^{(1)}\|^{2}_{L_2^{0,0}})\\
				+3(C_{11}T^2\|\Phi_m^{(1)}\|^4_{L_2^{0,1}}+\varepsilon\mathbb{E} \sup_{t\in[0,T]} | v_{m,n,K}|^2_{L_x^2})\\
				+\varepsilon\mathbb{E} \sup_{t\in[0,T]} | v_{m,n,K}|^2_{L_x^2}+C_{12}T\|\Phi_m^{(1)}\|^2_{L_2^{0,0}}(\mathbb{E}(|u_m(0)|^{2}_{L_x^2})+T \|\Phi_m^{(1)}\|^{2}_{L_2^{0,0}})\\
				+C_{13}(\mathbb{E}(|u_m(0)|^{4}_{L_x^2})+T^{2} \|\Phi_m^{(1)}\|^{4}_{L_2^{0,0}}+T\|\Phi_m^{(2)}\|^2_{L_2^{0,1}})\\ +\varepsilon\mathbb{E} \sup_{t\in[0,T]} | v_{m,n,K}|^2_{L_x^2}+C_{14}T^{2} \|\Phi_m^{(2)}\|^{4}_{L_2^{0,1}} \\+\frac{\gamma_1}{2\gamma_2}T^{\frac{1}{2}}\|\Phi_m^{(2)}\|_{L_2^{0,1}} \mathbb{E} \sup_{t\in[0,T]}| v_{m,n,K}|^2_{L_x^2},
			\end{array}
		\end{equation}
		for any $T>0$.
		
		To get rid of the last term in \eqref{sup_esti}, we note that 
		\begin{equation}\label{v_n_L_infty}
			\begin{aligned}
				\mathbb{E} \sup_{t\in[0,T]}| v_{m,n,K}|^2_{L_x^2} & \leq  C_{15} (\mathbb{E}(|u_m(0)|^{2}_{L_x^2})+T \|\Phi_m^{(1)}\|^{2}_{L_2^{0,1}}+T\|\Phi_m^{(2)}\|^2_{L^{0,1}_2})\\
				&
				\quad+\varepsilon\mathbb{E} \sup_{t\in[0,T]}|v_{m,n,K}|^2_{L_x^2}+\varepsilon\mathbb{E} \sup_{t\in[0,T]}|\partial_x u_{m,n,K}|^2_{L_x^2}.
			\end{aligned}
		\end{equation}
		
		On the other hand, we have
		\begin{equation}\label{compare_Q}
				\mathbb{E} \sup_{t\in[0,T]} {Q}_t(u_{m,n,K},v_{m,n,K})\geq \left(\frac{\gamma_1}{4\gamma_2} \wedge \frac{1}{2}\right)\mathbb{E}\sup_{t\in[0,T]}(\|u\|_{H^1}^2+\|v\|_{H^1}^2).
		\end{equation}
		
		Hence, combining \eqref{sup_esti}, \eqref{v_n_L_infty} with \eqref{compare_Q}, we obtain
		\begin{equation}\label{priori_esti_finish}
			\begin{aligned}
				&\quad\mathbb{E}\sup_{t\in[0,T]}(\|u\|_{H^1}^2+\|v\|_{H^1}^2)\\
				\leq 
				& C(T,\gamma_1,\gamma_2,\beta)\Big( \sum_{i=1,5} \big(\mathbb{E}|u(0)|^{2i}_{L_x^2}+ \|\Phi^{(1)}\|^{2i}_{L_2^{0,1}}+\|\Phi^{(2)}\|^{2i}_{L_2^{0,1}}\big)+\mathbb{E}|v(0)|^{2}_{L_x^2} \\
				&\   +\mathbb{E}|v(0)|^{3}_{L_x^3} +\mathbb{E}\left| {\partial_x u(0)} \right| ^2_{L_x^2} +\mathbb{E}\left| {\partial_x v(0)} \right| ^2_{L_x^2}+\mathbb{E} |u(0)|^4_{L_x^4}\Big)				
			\end{aligned}
		\end{equation}
		which finishes the proof. 
	\end{proof}
	
	\section{The Global Well-Posedness of Approximation Equations}\label{chapterwellposedness}
	In this section, for any $T>0$, we prove $\{(u_{m,n,k}, v_{m,n,k})\}_{n\geq m,m,n,k\in \mathbb{N^+}}$ are well-posed in $ C([0,T];H^2_x\times H^3_x) $ almost surely. 
	We introduce the following linear approximation equations:
	\begin{equation}\label{linearapproxi}
		\left\{
		\begin{aligned}
			&d \breve{u}_{m,n}=i\partial_{xx}\breve{u}_{m,n}dt + \Phi_{m}^{(1)} dW_t^{(1)},\\
			&d \breve{v}_{m,n}=-\partial_{xxx}\breve{v}_{m,n}dt+ \Phi_m^{(2)}dW_t^{(2)},\\
			&\breve{u}_m(0)=0,\  \breve{v}_m(0)=0.
		\end{aligned}
		\right.
	\end{equation}
	Set $\tilde{u}_{m,n,K}=u_{m,n,K}-\breve{u}_{m,n}$. According to Proposition \ref{prop:linear}, $(\breve{u}_{m,n},\breve{v}_{m,n})\in C([0,T];H^2_x \times H^3_x)$. Thus, for the simplicity of notations, we only need to prove Proposition \ref{prop:globalwellposedness}, where $u,v,f,g$ actually represent
	$$
	u_{m,n,k}- \breve{u}_{m,n}, v_{m,n,k}-\breve{v}_{m,n},  \breve{u}_{m,n}, \breve{v}_{m,n},
	$$
	respectively. 
	\begin{prop}\label{prop:globalwellposedness}
		For any $ T>0, n\in \mathbb{N^+}$ and $f, g, u_0, v_0$ satisfying 
		$$
		(f,g)\in L_T^\infty H_x^2\times L_T^\infty H_x^3, \  \text{supp}~\hat{u}_0, \text{supp}~\hat{v}_0\in (-n,n),
		$$
		\begin{equation*}
			\left\{
			\begin{aligned}
				&i\partial_t u+\partial_{xx}u =(u+f)( \gamma_1 \psi_K(|u+f|^2)(v+g)+\beta \varphi_K(|u+f|^2)|u+f|^2),\\
				&\partial_t v+\partial_{xxx}v=  P_{n}\partial_x(\gamma_2\varphi_K(|u+f|^2)|u+f|^2-\varphi_K(v+g)(v+g)^2/2),\\
				&(u(0,x),v(0,x)=(u_0(x),v_0(x)),
			\end{aligned}
			\right.
		\end{equation*}
		is global well-posedness in $H^2(\mathbb{R})\times H_{n}(\mathbb{R})$, where 
		$$H_n=\{h\in L^2(\mathbb{R}):\mathrm{supp}~\mathscr{F}(h)\subset [-n,n]\}$$
		with the norm $\|h\|_{H_n}=\|h\|_{L^2}$.
	\end{prop}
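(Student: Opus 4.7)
The plan is a two-step approach: first, local well-posedness via a Banach fixed-point argument in $C([0,T_0];H^2(\mathbb{R})\times H_n(\mathbb{R}))$; then, global extension by a priori energy estimates ruling out finite-time blow-up. Two features make everything work: the cutoffs $\varphi_K,\psi_K$ render all nonlinear terms pointwise bounded with bounded derivatives of all orders, and on $H_n$ Bernstein's inequality $\|\partial_x^\alpha h\|_{L^2}\leq n^\alpha\|h\|_{L^2}$ turns $\partial_x^3$ into a bounded operator, so the $v$-equation is morally an ODE in the Banach space $H_n$.

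For local existence I would write the system in Duhamel form and set up the fixed-point map on a closed ball of $C([0,T_0];H^2\times H_n)$. Here $S(t)$ is unitary on $H^2$ and $U(t)$ is an isometry on $H_n$ (being a Fourier multiplier of modulus one that preserves the frequency support $[-n,n]$), so all the work is in the nonlinear estimates. For the $v$-component, combine $\|P_n\partial_x(\cdot)\|_{L^2}\leq n\|\cdot\|_{L^2}$ with bounds like $\|\varphi_K(|w|^2)|w|^2\|_{L^2}\leq C(K)\|w\|_{L^2}$, obtained by splitting according to $\{|w|\leq 1\}$ and its complement (of finite measure for $w\in L^2$). For the $u$-component, use the algebra property of $H^2(\mathbb{R})$ together with Moser-type composition estimates for the smooth cutoffs to obtain $\|N_1(u+f,v+g)\|_{H^2}\leq C(K,n,\|u\|_{H^2},\|v\|_{L^2},\|f\|_{H^2},\|g\|_{H^3})$ and an analogous Lipschitz bound on bounded balls. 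Choosing $T_0$ small then closes a strict contraction.

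For the global extension I would derive a priori bounds on $\|u(t)\|_{H^2}^2+\|v(t)\|_{L^2}^2$. Since $v\in H_n$, Bernstein reduces every Sobolev norm of $v$ to its $L^2$ norm; skew-adjointness of $\partial_x^3$ kills the linear term, and integration by parts together with the cutoff bounds give
\begin{equation*}
\tfrac{d}{dt}\|v\|_{L^2}^2\leq C_{K,n}\bigl(1+\|v\|_{L^2}^2+\|u\|_{L^2}^2+\|f\|_{L^2}^2+\|g\|_{L^2}^2\bigr).
\end{equation*}
For $u$, parallel energy identities on $\|u\|_{L^2}^2$ (observing that the part of $\mathrm{Im}\langle u,N_1\rangle$ proportional to $|u|^2$ vanishes and only the $f$-cross term survives, which the cutoffs control) and on $\|\partial_{xx}u\|_{L^2}^2$ (pairing $\partial_{xx}u$ with $\partial_{xx}N_1$ and invoking the algebra property) yield polynomial-in-$\|u\|_{H^2}$ bounds. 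Gronwall on the sum produces a locally finite bound, so the local solution extends to every $T>0$. Uniqueness and continuous dependence are immediate from the Lipschitz estimates already used in the contraction.

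The principal obstacle is the $H^2$ a priori estimate for $u$: after differentiating the equation twice one must control expressions such as $\partial_{xx}[\varphi_K(|u+f|^2)|u+f|^2(u+f)]$ and $\partial_{xx}[\psi_K(|u+f|^2)(u+f)(v+g)]$ in which the composition structure and the multilinearity interact. The compactness of $\mathrm{supp}\,\varphi$ makes every derivative of $\varphi_K$ uniformly bounded, and the one-dimensional embedding $H^2\hookrightarrow L^\infty$ converts all $L^\infty$ factors into polynomial powers of the $H^2$ norm, so the terms close by standard Moser-type tame inequalities; nevertheless the bookkeeping is the most delicate step of the proof.
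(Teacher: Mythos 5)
Your route differs from the paper's in both steps. The paper first proves local well-posedness only in $L^2\times H_n$, using Strichartz norms $L^6_{x,T}$ and $L^8_{x,T}$; because the local existence time there depends on $\|u+f\|_{L^6_{x,T}}$ and not just on conserved quantities, globality is obtained by a blow-up/contradiction argument, and $H^2$ regularity is then \emph{propagated} via a Duhamel estimate that is linear in $\|u\|_{L^\infty_T H^2_x}$ with coefficients controlled by the already-global Strichartz norms. You instead contract directly in $C([0,T_0];H^2\times H_n)$ (legitimate here: the cutoffs make $w\mapsto w\psi_K(|w|^2)$ and $w\mapsto \varphi_K(|w|^2)|w|^2 w$ smooth with bounded derivatives, $H^2(\mathbb{R})$ is an algebra, and Bernstein controls every norm of $v$ by $\|v\|_{L^2}$), so your $T_0$ depends only on $\|u_0\|_{H^2}+\|v_0\|_{L^2}$ and no contradiction argument is needed. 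That part is sound and arguably more elementary.

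The gap is in your global step. ``Polynomial-in-$\|u\|_{H^2}$ bounds'' followed by ``Gronwall'' does not give a bound on an arbitrary interval $[0,T]$: for $y'\leq C y^{p}$ with $p>1$ the resulting bound blows up in finite time, so nothing is extended. You must show the top-order differential inequality is \emph{affine} in $\|\partial_x^2 u\|_{L^2}^2$ with coefficients controlled by already-established lower-order bounds, and as written your hierarchy ($L^2$ for $u,v$, then straight to $H^2$) does not achieve this: pairing $\partial_x^2\bar u$ with terms like $\psi_K'(|u+f|^2)\,\partial_x(|u+f|^2)\,\partial_x(u+f)\,(v+g)$ produces $\int |\partial_x^2 u|\,|\partial_x u|^2(\cdots)$, which by Gagliardo--Nirenberg is of order $\|\partial_x^2u\|_{L^2}^{9/4}\|u\|_{L^2}^{3/4}$ --- superlinear --- unless $\|u\|_{H^1}$ is already known to be bounded. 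The fix is to insert an intermediate $H^1$ energy estimate for $u$ (which does close linearly, because $\psi_K$, $\psi_K'$, $\varphi_K'$ are supported where $|u+f|^2\lesssim K$, so quantities such as $|u+f|^2\psi_K'(|u+f|^2)$ are pointwise bounded by $C(K)$ and every coefficient is bounded by $C(K)(1+\|v+g\|_{L^\infty})$ with $\|v\|_{L^\infty}\lesssim n^{1/2}\|v\|_{L^2}$); with the $H^1$ bound in hand the offending $H^2$ terms become sublinear and Gronwall applies. Alternatively, adopt the paper's persistence-of-regularity device: estimate $\|N_1\|_{H^2}$ by the tame inequality $\lesssim C(\|u\|_{H^1},\|v\|_{L^2},\dots)(1+\|u\|_{H^2})$ and iterate the Duhamel bound on small time steps, which avoids differentiating the nonlinearity twice altogether.
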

	\begin{proof}
		Since $v$ is cut off, we first establish the local well-posedness in $L^2\times H_n$. Let 
		$$\|u\|_{\mathscr{X}_T}:=\|u\|_{C([0,T];L_x^2)}+\|u\|_{L_{x,T}^6},\quad \|v\|_{\mathscr{Y}_{T}}:=\|v\|_{C([0,T];H_n)}+\|v\|_{{L_{x,T}^8}}$$
		and
		$$
		\|u\|_{\mathscr{X}^k_T}=\|u\|_{\mathscr{X}_T}+\sum_{i=1}^{k}\|\partial_{x}^i u\|_{\mathscr{X}_T},\ \text{for}\ k\in\mathbb{N^+}.
		$$
		The Sobolev embedding theorem implies $ f\in \mathscr{X}^k_T, \ \forall k\in \mathbb{N^+}$.
		Consider the equivalent integral equation:
		\begin{equation*}
			\begin{aligned}
				&u(t) = S(t)u_0\\
				&\qquad\quad+\mathscr{A}((u+f)( \gamma_1 \psi_K(|u+f|^2)(v+g)+\beta \varphi_K (|u+f|^2)|u+f|^2))\\
				&v(t) = U(t)v_0\\
				&\qquad\quad+\mathscr{B}(\gamma_2\varphi_K(|u+f|^2)|u+f|^2-\varphi_K(v+g)(v+g)^2/2),
			\end{aligned}
		\end{equation*}
		which introduce a map $\mathscr{J}$ from $\mathscr{X}_T\times \mathscr{Y}_{T}$ to $\mathscr{X}_T\times \mathscr{Y}_{T}$:
		\begin{equation*}
			\begin{aligned}
				&\mathscr{J}u(t) = S(t)u_0\\
				&\qquad\quad+\mathscr{A}((u+f)( \gamma_1 \psi_K(|u+f|^2)(v+g)+\beta \varphi_K (|u+f|^2)|u+f|^2)),\\
				&\mathscr{J}v(t) = U(t)v_0\\
				&\qquad\quad+\mathscr{B}(\gamma_2\varphi_K(|u+f|^2)|u+f|^2-\varphi_K(v+g)(v+g)^2/2).
			\end{aligned}
		\end{equation*}
		Here $\mathscr{A}(F)=-i\int_0^tS(t-t')F(t')~dt'$, $\mathscr{B}(G) = \int_0^t U(t-t')P_n\partial_xG(t')~dt'$.


		By Strichartz estimates, we have $\|S(t)u_0\|_{\mathscr{X}_T}\lesssim \|u_0\|_{L^2}$, $\|U(t)v_0\|_{\mathscr{Y}_T}\lesssim \|v_0\|_{L^2}$ and
		\begin{align*}
			\|\mathscr{A}(F)\|_{\mathscr{X}_T}\leq C\|F\|_{L_{x,T}^{6/5}},\quad \|\mathscr{B}(G)\|_{\mathscr{Y}_T}\leq nC\|G\|_{L_{x,T}^{8/7}},
		\end{align*}
		where $C$ is uniform for any $T>0$.
		
		To do a fixed point argument, we prove following estimates.
		By interpolation and H\"{o}lder inequality, we have
		\begin{align}
			&\begin{aligned}\label{nonlinearesti1}
				\|(u+f)\psi_K(|u+f|^2)(v+g)\|_{L_{x,T}^{6/5}}
				&\leq T^{\frac{3}{8}}\|u+f\|^\frac{3}{4}_{L_t^\infty L_x^2}\|u+f\|^\frac{1}{4}_{L_{x,T}^6}\\
				&\quad \cdot T^{\frac{7}{18}} \|v+g\|^{\frac{7}{9}}_{L_t^\infty L_x^2}\|v+g\|^{\frac{2}{9}}_{L_{x,T}^8},
			\end{aligned}\\
			&\|(u+f)\varphi_K(|u+f|^2)|u+f|^2\|_{L_{x,T}^{6/5}}\leq T^{\frac{1}{3}}\|u+f\|_{L_t^\infty L_x^2}\|u+f\|^2_{L_{x,T}^6},\label{nonlinearesti12}\\
			&\|(u+f)^2\varphi_K(|u+f|^2)\|_{L_{x,T}^{8/7}}\leq T^{\frac{13}{16}}\|u+f\|^{\frac{13}{8}}_{L_t^\infty L_x^2}\|u+f\|^{\frac{3}{8}}_{L_{x,T}^6},\label{nonlinearesti3}\\
			&\|(v+g)^2\varphi_K(v+g)\|_{L_{x,T}^{8/7}}\leq T^{\frac{5}{6}}\|v+g\|^{\frac{5}{3}}_{L_t^\infty L_x^2}\|v+g\|^{\frac{1}{3}}_{L_{x,T}^8}.\label{nonlinearesti4}
		\end{align}
		Thus, by  \eqref{nonlinearesti1}-\eqref{nonlinearesti4}, one has 
		\begin{align}
			&\begin{aligned}\label{boundedesti1}
				\|u(t)\|_{\mathscr{X}_T}&\leq\| u_0 \|_{L_x^2}+ \| S(t)u_0 \|_{L_{x,T}^{6}}\\
				&\quad+C(T^{\frac{55}{72}}\|u+f\|_{\mathscr{X}_T}\|v+g\|_{\mathscr{Y}_T}+T^{\frac{1}{3}}\|u+f\|^3_{\mathscr{X}_T}),
			\end{aligned}\\
			&\begin{aligned}\label{boundedesti2}
				\|v(t)\|_{\mathscr{Y}_{T}}& \leq\| v_0 \|_{L_x^2}+ \| S(t)v_0 \|_{L_{x,T}^{8}}\\
				&\quad+nC(T^{\frac{13}{16}}\|u+f\|^2_{\mathscr{X}_T}+T^{\frac{5}{6}}\|v+g\|^2_{\mathscr{Y}_T}).
			\end{aligned}
		\end{align}
		Also,
		\begin{equation*}
			\begin{aligned}
				\|u_1-u_2\|_{\mathscr{X}_T}&\leq CT^{\frac{55}{72}}( \|u_1-u_2\|_{\mathscr{X}_T}\|v_2+g\|_{\mathscr{Y}_T}+\|v_1-v_2\|_{\mathscr{Y}_T}\|u_1+f\|_{\mathscr{X}_T})\\
				&\quad+2C\|(u_1+f)(v_1+g)|u_1-u_2|(|u_1+f|+|u_2+f|)\|_{L_{x,T}^{6/5}}\\
				&\quad+C\||u_1-u_2|(|u_1+f|+|u_2+f|)|u_1+f|^2(u_1+f)\|_{L_{x,T}^{6/5}}\\
				&\quad+C\|(|u_1+f|+|u_2+f|)|u_1-u_2|(u_2+f)\|_{L_{x,T}^{6/5}}\\
				&\quad+C\||u_1+f|^2(u_1-u_2)\|_{L_{x,T}^{6/5}}.
			\end{aligned}
		\end{equation*}
		Thus,
		\begin{equation}\label{constractingesti1}
			\begin{aligned}
				\|u_1-u_2\|_{\mathscr{X}_T}
				&\leq CT^{\frac{55}{72}}( \|u_1-u_2\|_{\mathscr{X}_T}\|v_2+g\|_{\mathscr{Y}_T}+\|v_1-v_2\|_{\mathscr{Y}_T}\|u_1+f\|_{\mathscr{X}_T})\\
				&\quad+2CT^\frac{5}{16}\|u_1+f\|^2_{\mathscr{X}_T}\|v_1+g\|_{\mathscr{Y}_T}\|u_1-u_2\|_{\mathscr{X}_T}\\
				&\quad+2CT^\frac{5}{16}\|u_1+f\|_{\mathscr{X}_T}\|u_2+f\|_{\mathscr{X}_T}\|v_1+g\|_{\mathscr{Y}_T}\|u_1-u_2\|_{\mathscr{X}_T}\\
				&\quad+C\|u_1-u_2\|_{\mathscr{X}_T}\|u_1+f\|^3_{L^6_{x,T}} ( \|u_1+f\|_{\mathscr{X}_T}+\|u_2+f\|_{\mathscr{X}_T} ) \\
				&\quad+CT^\frac{1}{2}(\|u_1+f\|_{\mathscr{X}_T}+\|u_2+f\|_{\mathscr{X}_T}) \|u_1-u_2\|_{\mathscr{X}_T}\|u_2+f\|_{\mathscr{X}_T}  \\
				&\quad+CT^\frac{1}{2}\|u_1-u_2\|_{\mathscr{X}_T}\|u_1+f\|^2_{\mathscr{X}_T}.
			\end{aligned}
		\end{equation}
		We point out that the third term in the right of \eqref{constractingesti1}, which come from cubic term, will bring some extra difficulties because of the lack of $T$.
		
		In the same manner, we have
		\begin{equation}\label{constractingesti2}
			\begin{aligned}
				\|v_1-v_2\|_{\mathscr{Y}_T}&\leq  nT^{\frac{5}{16}}C \|u_1+f\|^2_{\mathscr{X}_T}(\|u_1+f\|_{\mathscr{X}_T}+\|u_2+f\|_{\mathscr{X}_T})\|u_1-u_2\|_{\mathscr{X}_T}\\
				&\quad+nT^{\frac{13}{16}}C\|u_1-u_2\|_{\mathscr{X}_T}(\|u_1+f\|_{\mathscr{X}_T}+\|u_2+f\|_{\mathscr{X}_T}) \\
				&\quad+ nT^{\frac{1}{2}}C\|v_1+g\|^2_{\mathscr{Y}_T}(\|v_1+g\|_{\mathscr{Y}_T}+\|v_2+g\|_{\mathscr{Y}_T})\|v_1-v_2\|_{\mathscr{Y}_T} \\
				&\quad+nT^{\frac{5}{6}}C\|v_1-v_2\|_{\mathscr{Y}_T}(\|v_1+g\|_{\mathscr{Y}_T}+\|v_2+g\|_{\mathscr{Y}_T}).
			\end{aligned}
		\end{equation}
		The $C$ in \eqref{constractingesti1} and \eqref{constractingesti2} is only depend on $\gamma_1,\gamma_2,\beta$.
		
		To make $\mathscr{J}$ contracting, we choose $T>0$ from two aspects. On one hand, for the terms that have factor $T$, we use small property of $T$. On the other hand, we choose $T$ sufficiently small to use the small property of $\|u_1+f\|^3_{L^6_{x,T}} $ for the term without $T$.

		Hence, by a fixed point argument, we obtain the local well-posedness in $L^2\times H_n$. 
		
		To extend the solution, we need a prior estimate for $\|(u,v)\|_{L^2\times L^2}$. Let 
		$$H(t) = \|u(t)\|_{L^2}^2+\|v(t)\|_{L^2}^2.$$
		For $0\leq t\leq T$, we have
		\begin{align*}
			H'(t) &= \int_\mathbb{R}2\mathrm{Re}(\bar{u}u_t)+2vv_t~dx \\
			&= 2\int_{\mathbb{R}}\mathrm{Im}(\gamma_1 \bar{u}f\psi_K(|u+f|^2)(v+g)+\beta \bar{u}f\varphi_K(|u+f|^2)|u+f|^2)\\
			&\quad - (\gamma_2\varphi_K(|u+f|^2)|u+f|^2-\varphi_K(v+g)(v+g)^2/2)P_n\partial_xv~dx\\
			&\leq C_1(\gamma_1,\gamma_2,\beta,k,\|f\|_{L_t^\infty L_x^\infty},\|g\|_{L_t^\infty L_x^2})+C_2(\gamma_1,n,\|f\|_{L_t^\infty L_x^\infty})H(t).
		\end{align*}
		By the Gronwall inequality, we have $$H(t)\leq e^{C_2t}(H(0)+C_1/C_2e^{-C_2T}).$$
		
		However, since the choice of $T$ not only rely on $\|u_0\|_{L_x^2}, \|v_0\|_{L_x^2}$, but also rely on $ \|u_1+f\|_{L^6_{x,T}}$, we can not get the global well-posedness in $\mathscr{X}_T\times\mathscr{Y}_T$ directly from the prior estimate for $\|(u,v)\|_{L^2\times L^2}$. We prove it by a contradiction.
		
		Assume the maximal existence interval of $(u,v)$ is $[0,T)$ with $T<\infty$. 
		By the proof of local well-posedness, we have  
		$$\|u\|_{L_{x,t\in (T',T)}^6}+\|v\|_{L_{x,t\in (T',T)}^8} = \infty, \ \forall~T'<T.$$
		But for any $T'<T''<T$, by the Strichartz estimates, we have
		\begin{align*}
			&\quad\|u\|_{L_{x,t\in (T',T'')}^6}+\|v\|_{L_{x,t\in (T',T'')}^8}\\
			&\leq C(\|u_{T'}\|_{L_x^2}+\|v_{T'}\|_{L_x^2})+  C(K,n,\gamma_1,\gamma_2,\beta)\int_{T'}^{T''}\|u(s)\|_{L^2_x}+\|v(s)\|_{L^2_x}~ds
		\end{align*}
		Hence, we obtain a contradiction, which finishes the proof of global well-posedness in  $\mathscr{X}_T\times\mathscr{Y}_T$. We should note that although there are five power terms in \eqref{constractingesti1}, our model is essentially subcritical.
		
		Finally, we improve the regularity of the solution to $ H^2_x\times H_n$.
		We consider the following inequalities:
		\begin{align*}
			\|u\|_{L^\infty_TH_x^2} & \leq C\|u_0\|_{H_x^2} +CT^{\frac{5}{6}}\|u+f\|_{L^\infty_TH_x^2}(\|v\|_{\mathscr{Y}_T}+\|g\|_{L_T^\infty H_x^3}) \\
			&\quad+CT^\frac{1}{2}\|u+f\|^2_{\mathscr{X}_T} \| u+f\|_{L_T^\infty H_x^2}+T^{\frac{1}{2}}C\|u+f\|^3_{\mathscr{X}_T},
		\end{align*}
		where $C$ depends on $ n, k,\gamma_1,\gamma_2,\beta $.
		Thus, we can get $u\in C([0,T];H^2_x)$ step by step, which finishes the proof.
	\end{proof} 
	
	\section{The Convergence of Approximation Equations}\label{chapterconvergence}
	
	In this section, to get the priori estimate of $ \|(u,v)\|_{L_T^\infty \mathcal{H}_x^1} $, we show the convergences of approximation equations in the sense of  $X_1(T)$  $a.s. \mathbb{P}$. 
	
	For any fixed $T>0$ and almost surely $\omega\in\Omega $, according to Proposition \ref{prop:linear} and Proposition \ref{prop:globalwellposedness}, we have 
	$$\int_{0}^{t}S(t-s)\Phi_m^{(1)}dW_t^{(1)},\int_{0}^{t}U(t-s)\Phi_m^{(2)}dW_t^{(2)}\in C([0,T];H_x^1)$$
	and $
	\sup_{K\in \mathbb{N^+}}\|u_{m,n,K}\|_{C([0,T];H_x^1)}<\infty, \ \sup_{K\in \mathbb{N^+}}\|v_{m,n,K}\|_{C([0,T];H_x^1)}<\infty$ 
	almost surely.
	Let
	\begin{align*}
		M(\omega)&=\left\|\int_{0}^{t}S(t-s)\Phi_m^{(1)}dW_t^{(1)}\right\|_{C([0,T];H_x^1)}+ \sup_{K\in \mathbb{N^+}}\|u_{m,n,K}\|_{C([0,T];H_x^1)}\\
		&\quad+\left\|\int_{0}^{t}U(t-s)\Phi_m^{(2)}dW_t^{(2)}\right\|_{C([0,T];H_x^1)}+\sup_{K\in \mathbb{N^+}}\|v_{m,n,K}\|_{C([0,T];H_x^1)}.
	\end{align*}
	Thus, as long as we choose $ K> C (M^2(\omega)+M(\omega))$ ($C$ only depends on the support of $\varphi$), \eqref{approxila} is equivalent to
	\begin{equation}\label{middleapproxila}
		\left\{
		\begin{aligned}
			&d u_{m,n}=i\partial_{xx}u_{m,n}dt -i(\gamma_1 u_{m,n}v_{m,n}+\beta |u_{m,n}|^2u_{m,n})dt + \Phi_m^{(1)} dW_t^{(1)},\\
			&d v_{m,n}=-\partial_{xxx}v_{m,n}dt+  P_n\partial_x(\gamma_2|u_{m,n}|^2-\frac{1}{2}v_{m,n}^2)dt+ \Phi_m^{(2)}dW_t^{(2)},\\
			&u_m(0)=P_mu_0,\ v_m(0)=P_mv_0.
		\end{aligned}
		\right.
	\end{equation}
	Hence, we have 
	\begin{equation}\label{convergein_K}
		\lim_{K\uparrow\infty} (u_{m,n,K},v_{m,n,K}) = (u_{m,n},v_{m,n})  \ \text{in}  \ X_1(T) \  a.s.\mathbb{P}.
	\end{equation} 
	
	In the next proposition, we will consider the convergence  of \eqref{middleapproxila} as $n\uparrow\infty$. Since the limiting equations are not cut off in the Fourier space any more, we use the workspace $X_1(T)$.
	
	\begin{prop}\label{prop:converge_n}
		For any $T>0$, we have 
		\begin{equation}\label{convergence_n}
			\lim_{n\uparrow\infty} (u_{m,n},v_{m,n}) = (u_{m},v_{m})  \ \text{in}\  X_1(T) \  a.s.\mathbb{P}.
		\end{equation}
	\end{prop}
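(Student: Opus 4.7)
The plan is to show that the sequence $\{(u_{m,n},v_{m,n})\}_{n\geq m}$ is Cauchy in $X_1(T)$ almost surely and to identify its limit with $(u_m,v_m)$ solving \eqref{smoothinitialandnoise}. I would first combine the uniform a priori estimate from Proposition \ref{prop:priori_esti}, Lemma \ref{lem:workspace}, and the convergence \eqref{convergein_K} as $K\uparrow\infty$ to obtain the uniform a.s.\ bound $M(\omega):=\sup_{n\geq m}\|(u_{m,n},v_{m,n})\|_{X_1(T)}<\infty$.

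For $n'\geq n\geq m$, set $\phi_{n,n'}:=u_{m,n}-u_{m,n'}$ and $\psi_{n,n'}:=v_{m,n}-v_{m,n'}$. Subtracting the two copies of \eqref{middleapproxila}, the stochastic forcing and initial data cancel, so the difference system is purely deterministic, and the decisive feature is that the $\psi$-equation splits as
\begin{equation*}
\psi_{n,n'}(t)=\int_0^t U(t-s)P_n\partial_x\Big[\gamma_2(|u_{m,n}|^2-|u_{m,n'}|^2)-\tfrac{1}{2}(v_{m,n}^2-v_{m,n'}^2)\Big] ds+\mathcal{R}_{n,n'}(t),
\end{equation*}
with the projection-mismatch residual
\begin{equation*}
\mathcal{R}_{n,n'}(t)=\int_0^t U(t-s)(P_n-P_{n'})\partial_x\Big(\gamma_2|u_{m,n'}|^2-\tfrac{1}{2}v_{m,n'}^2\Big) ds.
\end{equation*}
Applying Lemma \ref{lem:workspace}(ii)-(iii) with the bound $M$, on a sufficiently short subinterval $[a,a+\tau]\subset[0,T]$ (with $\tau=\tau(M)$) the nonlinear contributions can be absorbed, yielding
\begin{equation*}
\|(\phi_{n,n'},\psi_{n,n'})\|_{X_1(\tau)}\leq 2C_M\big(\|\phi_{n,n'}(a)\|_{H^1_x}+\|\psi_{n,n'}(a)\|_{H^1_x}\big)+2\|\mathcal{R}_{n,n'}\|_{X_1^2(\tau)}.
\end{equation*}
Iterating over the $\lceil T/\tau\rceil$ successive subintervals then reduces the Cauchy property to showing $\sup_{n'\geq n}\|\mathcal{R}_{n,n'}\|_{X_1^2(T)}\to 0$ as $n\to\infty$.

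For the residual, the function $G_{n'}:=\gamma_2|u_{m,n'}|^2-\tfrac{1}{2}v_{m,n'}^2$ is uniformly bounded in $L^\infty_TH^1_x$ by the algebra property of $H^1(\mathbb{R})$ and the bound $M$; its spectral tail satisfies $\sup_{n'\geq n}\|(P_n-P_{n'})G_{n'}\|_{L^\infty_TL^2_x}=O(n^{-1})$ by the elementary estimate $\|(I-P_n)g\|_{L^2_x}\leq n^{-1}\|g\|_{H^1_x}$. The remaining task is to propagate this decay through the linear smoothing of the KdV propagator to the $X_1^2(T)$-norm of $\mathcal{R}_{n,n'}$; this is the main obstacle, since the $L^2_xL^\infty_T$-component of $X_1^2(T)$ requires a Kato-type local smoothing for $U(t)$ (the linear analogue of what underlies Lemma \ref{lem:workspace}(iii)), used to trade the $\partial_x$ in the integrand against the $H^1_x$-control on $G_{n'}$ so that the rate $n^{-1}$ survives. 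Once $\|\mathcal{R}_{n,n'}\|_{X_1^2(T)}\to 0$ is in hand, Cauchiness yields an a.s.\ limit $(u_m,v_m)\in X_1(T)$; passing to the limit in the mild formulation (using the uniform bound $M$, the dominated convergence theorem, and Lemma \ref{lem:workspace}) verifies that it satisfies \eqref{smoothinitialandnoise}, completing the proof.
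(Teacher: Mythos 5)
Your overall architecture (uniform bound from Proposition \ref{prop:priori_esti} and \eqref{convergein_K}, difference system with cancelled noise and initial data, absorption on short subintervals of length $\tau(M)$, iteration over $\lceil T/\tau\rceil$ pieces, reduction to a projection-mismatch residual handled by KdV local smoothing) matches the paper's proof closely. The one structural difference is decisive, however: you set up a Cauchy argument comparing $(u_{m,n},v_{m,n})$ with $(u_{m,n'},v_{m,n'})$, so your residual is
$\mathcal{R}_{n,n'}=\mathscr{B}\,(P_n-P_{n'})\partial_x G_{n'}$ with $G_{n'}=\gamma_2|u_{m,n'}|^2-\tfrac12 v_{m,n'}^2$ \emph{depending on the larger index} $n'$, and you must show $\sup_{n'\ge n}\|\mathcal{R}_{n,n'}\|_{X_1^2(T)}\to 0$. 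This is the step you yourself flag as ``the main obstacle,'' and it is a genuine gap, not merely a technical one. The decay you actually establish, $\|(P_{n'}-P_n)G_{n'}\|_{L^\infty_TL^2_x}=O(n^{-1})$, is one derivative short: the $X_1^2(T)$-norm contains $C([0,T];H^1_x)$, so after the dual Kato smoothing $\|\mathscr{B}\partial_x F\|_{L^\infty_TL^2_x}\lesssim\|F\|_{L^1_xL^2_T}$ you still need $\sup_{n'\ge n}\|(P_{n'}-P_n)J^1G_{n'}\|_{L^1_xL^2_T}\to 0$. From the a priori estimate the family $\{G_{n'}\}$ is only \emph{uniformly bounded} at the $H^1$ level (the $H^2\times H^3$ regularity of Proposition \ref{prop:globalwellposedness} is not uniform in $n$, note the factor $n$ in $\|\mathscr{B}(G)\|_{\mathscr{Y}_T}\le nC\|G\|_{L^{8/7}_{x,T}}$), and uniform boundedness does not give uniform spectral-tail decay: a bounded family concentrating at frequency $\sim n'$ loses nothing under $P_{n'}-P_n$. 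So the Cauchy route, as written, cannot close.

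The paper avoids exactly this by first invoking local well-posedness of \eqref{smoothinitialandnoise} and comparing $(u_{m,n},v_{m,n})$ directly with the \emph{fixed} local solution $(u_m,v_m)$; the residual then is $\mathscr{B}(I-P_n)\partial_x(\gamma_2|u_m|^2-\tfrac12 v_m^2)$, i.e.\ a high-frequency tail of a single fixed function in $L^1_xL^2_T$ after one derivative, and the purely qualitative Lemma \ref{lem:converge_n} (no rate needed) gives convergence to zero. If you want to keep your Cauchy formulation, you would need either a uniform-in-$n$ bound in some $H^{1+\varepsilon}$ (not available here) or an equi-integrability/tightness statement in frequency for $\{J^1G_{n'}\}$; the simpler fix is to adopt the paper's comparison with the fixed limit equation, at the mild cost of having to prove local existence for \eqref{smoothinitialandnoise} separately rather than obtaining the limit object for free.
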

	\begin{proof}
		Firstly, we note that the local well-posedness of \eqref{smoothinitialandnoise} can be proved easily.
		By the priori estimate \eqref{priori_esti_finish} of $(u_{m,n},v_{m,n}) $ in $C([0,T];\mathcal{H}_x^1)$ and \eqref{convergein_K}, we have
		$$ 
		M_1(\omega) :=\sup_{n,K\in\mathbb{N^+}}\|(u_{m,n,K},v_{m,n,K})\|_{L_T^\infty \mathcal{H}_x^1} < \infty. 
		$$		
		To show $( u_{m,n},v_{m,n} )$ converging to $ (u_m,v_m) $, let $$(\tilde{u}_{m,n},\tilde{v}_{m,n})=(u_m,v_m)-( u_{m,n},v_{m,n} ). $$
		Then $ \tilde{u}_{m,n},\tilde{v}_{m,n} $ satisfy the equations
		\begin{equation}\label{converge_equation_u}
			\begin{aligned}
				d\tilde{u}_{m,n}-i\partial_{xx}\tilde{u}_{m,n}dt&
				=-i\gamma_1(u_{m}\tilde{v}_{m,n}+\tilde{u}_{m,n}v_{m,n})dt\\
				&\quad-i\beta[|u_{m}|^2\tilde{u}_{m,n}+u_{m}u_{m,n}\bar{\tilde{u}}_{m,n}+|u_{m,n}|^2\tilde{u}_{m,n}]dt.
			\end{aligned}
		\end{equation}
		and
		\begin{equation}\label{converge_equation_v}
			\begin{aligned}
				d\tilde{v}_{m,n}+\partial_{xxx} \tilde{v}_{m,n}dt
				&=\partial_{x}P_n[\gamma_2(u_m\bar{\tilde{u}}_{m,n}+\bar{u}_{m,n}\tilde{u}_{m,n})-\frac{1}{2}\tilde{v}_{m,n}(v_{m}+v_{m,n})]dt\\
				&\quad+(I-P_n)\partial_{x}(\gamma_2|u_{m}|^2-\frac{1}{2}v_{m}^2)dt
			\end{aligned}
		\end{equation}
		with initial value $ (\tilde{u}_{m,n}(0),\tilde{v}_{m,n}(0))=(0,0)$.
		
		Let $T^*(\omega)\in(0,T]$ such that $u_m\in X^1_{T^*}$, $v_m\in X^2_{T^*}$.
		We have 
		\begin{equation}\label{tilde_u_mn}
			\|\tilde{u}_{m,n}\|_{X^1_1(t^*)} \leq t^*C(\gamma_1,\beta,M_1,\|(u_m,v_m)\|_{X_1(T^*)})\|(\tilde{u}_{m,n},\tilde{v}_{m,n})\|_{X_1(t^*)}
		\end{equation}
		and
		\begin{equation}\label{tilde_v_mn}
			\begin{aligned}
				\|\tilde{v}_{m,n}\|_{X^2_1(t^*)} &\leq(t^*)^{\frac{1}{2}}C(\gamma_2,M_1,\|(u_m,v_m)\|_{X_1(T^*)})\|(\tilde{u}_{m,n},\tilde{v}_{m,n})\|_{X_1(t^*)}\\
				&\quad+\|\mathscr{B}(I-P_n)\partial_{x}(\gamma_2|u_m|^2-\frac{1}{2}v_m^2)\|_{X_1^2(t^*)},
			\end{aligned}
		\end{equation}
		for any $t^*\in[0,T^*]$.
		
		By \eqref{tilde_u_mn}, \eqref{tilde_v_mn} and choosing $t^*$ sufficiently small (independent of $n$), we obtain
		\begin{align*}
			\|(\tilde{u}_{m,n},\tilde{v}_{m,n})\|_{X_1(t^*)}\leq 2 \|\mathscr{B}(I-P_n)\partial_x(\gamma_2|u_m|^2-\frac{1}{2}(v_m)^2)\|_{X_1^2(t^*)}.
		\end{align*}
		To show the righthand tend to $0$ when $n\uparrow \infty$, we need a modification of $I-P_n$. Let $\tilde{P}_{>n}:=\mathscr{F}^{-1} \tilde{\varphi}(\xi/n)\mathscr{F}$ where $\tilde{\varphi}\in C^\infty(\mathbb{R})$, $\tilde{\varphi}|_{|\xi|\geq 1} = 1$, $\tilde{\varphi}|_{|\xi|\leq 1/2} =0 $. Then, $I-P_n = \tilde{P}_{>n} (I-P_n)$. 
		By the group estimate of $U(t)$ and Lemma \ref{lem:converge_n}, we  have
		\begin{equation}\label{converge_to_0_n}
			\begin{aligned}
				&\quad\lim_{n\uparrow\infty}\|\mathscr{B}(I-P_n)\partial_x(\gamma_2|u_m|^2-\frac{1}{2}(v_m)^2)\|_{X_1^2(t^*)}\\
				&=\lim_{n\uparrow\infty}\|\mathscr{B}\tilde{P}_{>n} (I-P_n)\partial_x(\gamma_2|u_m|^2-\frac{1}{2}(v_m)^2)\|_{X_1^2(t^*)}\\
				&\leq \lim_{n\uparrow\infty} C(T) \|\tilde{P}_{>n}J^1(\gamma_2|u_m|^2-\frac{1}{2}(v_m)^2)\|_{L_x^1 L_{t^*}^2}\\
				&=0,
			\end{aligned}
		\end{equation}
		which implies that 
		\begin{equation}\label{converge_interval}
			\lim_{n\uparrow\infty} \|(u_{m,n}-u_{m},v_{m,n}-v_m)\|_{X_1(t^*)}=0.
		\end{equation}
	 Furthermore, we divide  $[0,T]$ into finite small intervals $[0,t^*], \ [t^*,2t^*]\cdots$. Hence, we can extend \eqref{converge_interval} to 
		$$\lim_{n\uparrow\infty} \|(u_{m,n}-u_{m},v_{m,n}-v_m)\|_{X_1(T)}=0,$$
		by the above method and $\lim_{n\uparrow\infty}\|(\tilde{u}_{m,n}(t^*),\tilde{v}_{m,n}(t^*))\|_{\mathcal{H}_x^1}=0$ step by step.
	\end{proof}
	
	\begin{lemma}\label{lem:converge_n}
		Using the notations of Proposition \ref{prop:converge_n}, for any $f\in L_x^1L_T^2$, we have 
		$$
		\lim_{n\uparrow\infty} \|\tilde{P}_{>n} f\|_{L_x^1L_T^2}=0.
		$$
	\end{lemma}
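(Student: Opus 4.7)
The plan is to combine a uniform boundedness estimate for the family $\{\tilde{P}_{>n}\}$ on $L_x^1 L_T^2$ with a density argument in the spatial frequency.

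First I would decompose $\tilde{P}_{>n} = I - Q_n$, where $Q_n$ has Fourier symbol $1-\tilde{\varphi}(\cdot/n)$. Because $\tilde{\varphi}\equiv 1$ on $\{|\xi|\geq 1\}$, the function $1-\tilde{\varphi}$ lies in $C_c^\infty(\mathbb{R})$, so its inverse Fourier transform $\kappa_1$ is Schwartz. By scaling, $Q_n g = \kappa_n *_x g$ with $\kappa_n(x) = n\kappa_1(nx)$, hence $\|\kappa_n\|_{L^1} = \|\kappa_1\|_{L^1}$ is independent of $n$. Viewing $f$ as an $L_T^2$-valued function of $x$, Young's convolution inequality in this vector-valued setting gives
$$\|\tilde{P}_{>n}f\|_{L_x^1 L_T^2} \leq (1+\|\kappa_1\|_{L^1})\|f\|_{L_x^1 L_T^2}, \qquad n\in\mathbb{N}^+.$$

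Next, I would use that $\tilde{P}_{>n}$ annihilates functions whose spatial Fourier support lies in $\{|\xi|\leq n/2\}$, since $\tilde{\varphi}(\xi/n)=0$ there. Concretely, fix $\chi\in C_c^\infty(\mathbb{R})$ with $\chi(0)=1$ and $\operatorname{supp}\chi\subset[-1,1]$, and put $P_N^\chi g = \mathscr{F}_x^{-1}(\chi(\cdot/N)\mathscr{F}_x g)$; this is convolution (in $x$) against the approximate identity $\phi_N(x)=N(\mathscr{F}^{-1}\chi)(Nx)$. The standard Banach-space-valued approximate-identity argument (applied with target space $L_T^2$) yields $P_N^\chi f \to f$ in $L_x^1 L_T^2$ as $N\uparrow\infty$, while the spatial Fourier support of $P_N^\chi f$ lies in $[-N,N]$, so $\tilde{P}_{>n}P_N^\chi f \equiv 0$ as soon as $n\geq 2N$.

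Combining the two ingredients, given $\varepsilon>0$ I choose $N$ with $\|f-P_N^\chi f\|_{L_x^1 L_T^2}<\varepsilon/(1+\|\kappa_1\|_{L^1})$; then for every $n\geq 2N$,
$$\|\tilde{P}_{>n}f\|_{L_x^1 L_T^2} = \|\tilde{P}_{>n}(f-P_N^\chi f)\|_{L_x^1 L_T^2} \leq (1+\|\kappa_1\|_{L^1})\|f-P_N^\chi f\|_{L_x^1 L_T^2} < \varepsilon,$$
which establishes the lemma. I do not foresee any serious obstacle: the only delicate step is the vector-valued convergence $\phi_N *_x f \to f$ in $L_x^1(L_T^2)$, and this follows in a routine way from density of simple $L_T^2$-valued compactly supported continuous functions together with the uniform bound $\|\phi_N *_x g\|_{L_x^1 L_T^2}\leq \|\phi_N\|_{L^1}\|g\|_{L_x^1 L_T^2}$ just noted.
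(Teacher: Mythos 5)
Your proof is correct, and it shares its first half with the paper while diverging in the second. The uniform bound is identical in substance: the paper also observes that $1-\tilde{\varphi}\in C_0^\infty$, so that $\tilde{P}_{>n}=I-Q_n$ with $Q_n$ a convolution against a rescaled kernel of $n$-independent $L^1$ norm, and concludes $\sup_n\|\tilde{P}_{>n}f\|_{L_x^1L_T^2}\leq C\|f\|_{L_x^1L_T^2}$ by the vector-valued Young/Minkowski inequality. Where you differ is the dense class and the mechanism of convergence on it. The paper approximates $f$ by Schwartz functions $f_k$ and then shows $\|\tilde{P}_{>n}f_k\|_{L_x^1L_T^2}\to 0$ by a genuine computation: inserting the weight $(1+x^2)^{1/2}$, passing from $L_x^1L_T^2$ to $L_T^2L_x^2$ by Cauchy--Schwarz, applying Plancherel, and using dominated convergence on the Fourier side (where $\tilde{\varphi}(\xi/n)\to 0$ pointwise since $\tilde{\varphi}$ vanishes near the origin). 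You instead approximate $f$ by the band-limited functions $P_N^\chi f$, for which $\tilde{P}_{>n}P_N^\chi f$ vanishes identically once $n\geq 2N$, because the symbol $\tilde{\varphi}(\cdot/n)$ is supported in $\{|\xi|\geq n/2\}$; the price is that you must justify $P_N^\chi f\to f$ in $L_x^1L_T^2$, i.e.\ the $L_T^2$-valued approximate-identity convergence, which rests on continuity of translation in $L^1(\mathbb{R};L_T^2)$ and is no harder than the density of Schwartz functions that the paper takes for granted. Your route trades the paper's weighted Plancherel computation for exact annihilation on the dense class, and it makes more transparent exactly where the hypothesis $\tilde{\varphi}|_{|\xi|\leq 1/2}=0$ enters; both arguments are sound.
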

	\begin{proof}
		First, we can find a sequence of $\{f_n\}_{n\in\mathbb{N^+}} \subset \mathscr{S}(\mathbb{R}\times[0,T])$, such that $\lim_{k\uparrow\infty} f_k=f $ in sense of $L_x^1L_T^2$. Since $\tilde{\varphi}(\xi/n)-1\in C_0^{\infty}(\mathbb{R}) $, we have
		\begin{align*}
			&\quad\sup_{n\in \mathbb{N^+}}\|\tilde{P}_{>n} f\|_{L_x^1L_T^2}\\
			&\leq\sup_{n\in \mathbb{N^+}}\|\int_{\mathbb{R}}  n\mathscr{F}^{-1}_{\xi} (\tilde{\varphi}-1)(ny)\cdot f(x-y,t)~dy\|_{L_x^1L_T^2}+ \| f(x,t) \|_{L_x^1L_T^2}\\
			&\leq \|\mathscr{F}^{-1}_{\xi} (\tilde{\varphi}-1)\|_{L_x^1} \| f(x,t) \|_{L_x^1L_T^2}+ \| f(x,t) \|_{L_x^1L_T^2}\\
			&\leq C\| f(x,t) \|_{L_x^1L_T^2},
		\end{align*}
		where $C$ is independent of $ n$. 
		Also, for any $f_k$, by the H\"{o}lder inequality, dominated convergence theorem and monotone convergence theorem we have
		\begin{align*}
			&\quad\lim_{n\uparrow\infty}\|\tilde{P}_{>n} f_k\|_{L_x^1L_T^2}\\
			&\lesssim\lim_{n\uparrow\infty}\|(1+x^2)^{\frac{1}{2}}\tilde{P}_{>n} f_k\|_{L_T^2L_x^2}\\
			&\lesssim\lim_{n\uparrow\infty}\|\tilde{P}_{>n} f_k\|_{L_T^2L_x^2}+ C\lim_{n\uparrow\infty}\|x\tilde{P}_{>n} f_k\|_{L_T^2L_x^2}\\
			&\lesssim\lim_{n\uparrow\infty}\|\tilde{\varphi}(\frac{\xi}{n}) \hat{f}_k(\xi)\|_{L_T^2L_\xi^2}+C\lim_{n\uparrow\infty}\|\partial_{\xi}(\tilde{\varphi}(\frac{\xi}{n}) \hat{f}_k(\xi))\|_{L_T^2L_\xi^2}\\
			&\lesssim\lim_{n\uparrow\infty}\|\frac{1}{n}\tilde{\varphi}'(\frac{\xi}{n}) \hat{f}_k(\xi)\|_{L_T^2L_\xi^2}+C\lim_{n\uparrow\infty}\|\tilde{\varphi}(\frac{\xi}{n} )\hat{f}'_k(\xi)\|_{L_T^2L_\xi^2}\\
			&=0.
		\end{align*}
		
		Thus, we have
		\begin{align*}
			\lim_{n\uparrow\infty} \|\tilde{P}_{>n} f\|_{L_x^1L_T^2}&\leq\lim_{n\uparrow\infty} \|\tilde{P}_{>n} (f-f_k)\|_{L_x^1L_T^2}+\lim_{n\uparrow\infty} \|\tilde{P}_{>n} f_k\|_{L_x^1L_T^2}\\
			&\lesssim \|f-f_k\|_{L_x^1L_T^2}\rightarrow 0,~k\rightarrow \infty,
		\end{align*}
		which finishes the proof.
	\end{proof}
	\begin{lemma}\label{lem:converge_m}
		For any $T>0$, we have
		$$ 
		\lim_{m\uparrow\infty}(u_m,v_m)=(u,v) \ \text{in} \ X_1(T) \ a.s.\mathbb{P}.
		$$
	\end{lemma}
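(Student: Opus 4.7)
The strategy mirrors that of Proposition \ref{prop:converge_n}: I will establish a uniform-in-$m$ almost-sure bound on $\{(u_m,v_m)\}$, show the sequence is Cauchy in $X_1(t^*)$ on a short interval whose length does not depend on $m$, and then iterate to cover $[0,T]$. By combining Proposition \ref{prop:priori_esti} with the already-established convergences \eqref{convergein_K} and \eqref{convergence_n}, I obtain
$$M_2(\omega):=\sup_{m\in\mathbb{N^+}}\|(u_m,v_m)\|_{L^\infty_T\mathcal{H}^1_x}<\infty\quad a.s.\,\mathbb{P}.$$
Together with the mild-solution bounds of Lemma \ref{lem:workspace}, this yields an $X_1(T)$-control of each $(u_m,v_m)$ with a pathwise constant independent of $m$.

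Next I would form the Cauchy differences $\hat{u}_{m_1,m_2}=u_{m_1}-u_{m_2}$ and $\hat{v}_{m_1,m_2}=v_{m_1}-v_{m_2}$, which satisfy a system whose inhomogeneities split into three parts: (i) the difference of deterministic linear flows $S(t)(P_{m_1}-P_{m_2})u_0$ and $U(t)(P_{m_1}-P_{m_2})v_0$, which tend to zero in $X_1(T)$ thanks to the assumed convergence of the mollified initial data in $L^2(\Omega;\mathcal{H}^1_x)$; (ii) the stochastic convolution driven by $\Phi_{m_1}-\Phi_{m_2}$, whose $X_1(T)$-norm tends to zero in $L^2(\Omega)$ by Proposition \ref{prop:linear} together with $\|\Phi-\Phi_m\|_{L_2^{0,1}\times L_2^{0,1}}\to 0$; (iii) the nonlinear differences $N_j(w_{m_1})-N_j(w_{m_2})$, which are estimated pathwise by Lemma \ref{lem:workspace} in the same spirit as \eqref{tilde_u_mn}--\eqref{tilde_v_mn}. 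Collecting these on $[0,t^*]$ gives
$$\|(\hat{u}_{m_1,m_2},\hat{v}_{m_1,m_2})\|_{X_1(t^*)}\leq R_{m_1,m_2}(\omega)+(t^*)^{1/2}C(\gamma_1,\gamma_2,\beta,M_2(\omega))\|(\hat{u}_{m_1,m_2},\hat{v}_{m_1,m_2})\|_{X_1(t^*)},$$
where $R_{m_1,m_2}(\omega)\to 0$ a.s. as $m_1,m_2\uparrow\infty$. Choosing $t^*=t^*(\omega)>0$ small enough to absorb the last term shows that $\{(u_m,v_m)\}$ is Cauchy in $X_1(t^*)$ a.s.; since the $X_1$-norm dominates $C([0,t^*];\mathcal{H}^1_x)$, the step can be repeated on $[t^*,2t^*],[2t^*,3t^*],\dots$ with the same $t^*$, covering $[0,T]$ in finitely many steps. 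Passing to the limit in the mild formulation of \eqref{smoothinitialandnoise} identifies $(u,v):=\lim_m(u_m,v_m)\in X_1(T)$ as the unique strong solution of \eqref{smodel}, finishing Theorem \ref{thm:main}.

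The main obstacle is preventing the step size $t^*$ from shrinking with $m$, and this is precisely where the priori estimate of Section \ref{chapterpriori} pays off: the uniform bound $M_2(\omega)$ decouples the Lipschitz constant of the nonlinearity in Lemma \ref{lem:workspace} from $m$, so that $t^*$ depends only on the a.s.\ finite random quantity $M_2(\omega)$. A second, subtler issue is that, unlike in Proposition \ref{prop:converge_n}, the noise itself changes with $m$: to upgrade the $L^2(\Omega)$-convergence of the stochastic convolution coming from Proposition \ref{prop:linear} to pathwise convergence I would extract a subsequence $\{m_k\}$ with $\sum_k\|\Phi-\Phi_{m_k}\|_{L_2^{0,1}}<\infty$ and apply Borel--Cantelli, so that $R_{m_{k_1},m_{k_2}}(\omega)\to 0$ a.s.; Cauchy-ness of the full sequence in $X_1(T)$ then follows from the contraction estimate above combined with Cauchy-ness along the subsequence.
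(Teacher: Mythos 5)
Your proposal is correct and rests on the same pillars as the paper's proof: the uniform pathwise bound $M_2(\omega)=\sup_m\|(u_m,v_m)\|_{L^\infty_T\mathcal{H}^1_x}<\infty$ obtained from the priori estimate \eqref{priori_esti_finish} via Fatou, a difference system whose linear and stochastic inhomogeneities vanish (after passing to a subsequence so that the $L^2(\Omega)$-convergence of the stochastic convolutions becomes almost sure), a contraction on an interval $[0,t^*]$ with $t^*$ independent of $m$, and iteration. The one genuine divergence is structural: the paper compares $(u_m,v_m)$ directly with the already-constructed local solution $(u,v)$ of \eqref{smodel} on its existence interval $[0,T(\omega)]$, estimating $(\tilde u_m,\tilde v_m)=(u-u_m,v-v_m)$ via \eqref{converge_m}, and only afterwards (in the proof of Theorem \ref{thm:main}) uses the resulting bound to extend the solution past $T(\omega)$; you instead show $\{(u_m,v_m)\}$ is Cauchy in $X_1$, obtain a limit defined on all of $[0,T]$ by completeness, and identify it with the solution by passing to the limit in the mild formulation. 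Your route has the advantage of not needing the local solution as a reference object and of producing the global-in-time limit in one stroke, at the price of the extra identification step and of having to control the double-indexed stochastic terms $R_{m_1,m_2}$; the paper's route is shorter on those two points but must handle the step-by-step extension beyond $T(\omega)$ separately. Two small cautions: your contraction constant should be allowed to depend not only on $M_2(\omega)$ but on a uniform $X_1$-type bound for the approximations (the nonlinear estimates of Lemma \ref{lem:workspace} involve the $L^2_xL^\infty_t$ components, which $L^\infty_T\mathcal{H}^1_x$ alone does not control; your bootstrap remark addresses this but it deserves to be carried out), and the upgrade from Cauchy-ness along the subsequence to Cauchy-ness of the full sequence is not really needed, since convergence along a subsequence already suffices for the a priori bound used in Theorem \ref{thm:main}.
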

	\begin{proof}
		From \eqref{priori_esti_finish} and the Fatuo Lemma, we have
		\begin{align*}
			\mathbb{E}\sup_{m\in\mathbb{N^+}}\|u_m\|_{L_T^\infty H_x^1}&=\mathbb{E}\sup_{m\in\mathbb{N^+}}\|\lim_{n\uparrow\infty}u_{m,n}\|_{L_T^\infty H_x^1}=\mathbb{E}\sup_{m\in\mathbb{N^+}}\lim_{n\uparrow\infty}\|u_{m,n}\|_{L_T^\infty H_x^1}\\
			&\leq\mathbb{E}\varliminf_{n\uparrow\infty}\sup_{m\in\mathbb{N^+}}\|u_{m,n}\|_{L_T^\infty H_x^1}\\
			&\leq\varliminf_{n\uparrow\infty}\mathbb{E}\sup_{m\in\mathbb{N^+}}\|u_{m,n}\|_{L_T^\infty H_x^1}<\infty.
		\end{align*}
		Thus, we can let
		\begin{equation}\label{final_proof_1}
			M_2(\omega) :=\sup_{m\in\mathbb{N^+}} \|(u_m,v_m)\|_{L_T^\infty \mathcal{H}_x^1}<\infty \ a.s.\mathbb{P}. 
		\end{equation}
		We denote by $[0,T(\omega)]$ the local existence interval of \eqref{smodel} and consider equations that $\{(\tilde{u}_m, \tilde{v}_m)\}_{m\in\mathbb{N^+}}:=\{(u-u_m, v-v_m)\}_{m\in\mathbb{N^+}}$ satisfy:
		\begin{equation}\label{converge_m}
			\left\{
			\begin{array}{ccl}
				d\tilde{u}_m-i\tilde{u}_{m}dt&=&-i\gamma_1(\tilde{u}_mv+u_m\tilde{v}_m)dt-i\beta |u_m|^2\tilde{u}_mdt\\
				&&-i\beta u^2\bar{\tilde{u}}_mdt-i\beta u\bar{u}_m\tilde{u}_mdt+(I-P_m)\Phi^{(1)} dW_t^{(1)},\\
				d\tilde{v}_m+\partial_{xxx}\tilde{v}_{m}dt&=&\partial_{x}(\gamma_2(u\bar{\tilde{u}}_m+\bar{u}_m\tilde{u}_m)-\frac{1}{2}\tilde{v}_m(v+v_m))dt\\
				&&+(I-P_m)\Phi^{(2)} dW_t^{(2)},\\
				(\tilde{u}_m,\tilde{v}_m)&=& ((I-P_m)u_0,\ (I-P_m)v_0).
			\end{array}
			\right.
		\end{equation}
		We note that 
		$$
		\lim_{m\uparrow\infty}\mathbb{E}\left\|(S(t)(I-P_m)u_0,U(t)(I-P_m)v_0)\right\|^2_{X_1(T)}=0
		$$
		and
		\begin{align*}
		&\quad\mathbb{E}\left\|\int_{0}^{t}S(t-s)(I-P_m)\Phi^{(1)} dW_s^{(1)},U(t-s)(I-P_m)\Phi^{(2)} dW_s^{(2)}\right\|^2_{X_1(T)}\\
		&\rightarrow 0,\quad m\rightarrow \infty.
		\end{align*}
		Then, we can choose a subsequence such that linear terms $\{(l_m^1,l_m^2)\}_{m\in\mathbb{N^+}}$ and stochastic force terms $\{(f_m^1,f_m^2)\}_{m\in\mathbb{N^+}}$ converge to $0$ almost surely, as $m\uparrow\infty$. 
		
		Thus, we have
		\begin{align*}
			\|\tilde{u}_m\|_{X_1^1(t^*)}&\leq   C(\gamma_1,\beta,M_2,\|(u,v)\|_{X_{1}(T(\omega))})t^*\|(\tilde{u}_m,\tilde{v}_m)\|_{X_1(t^*)}\\
			&\quad +C(T)\|l_m^1 \|_{X^1_1(T)}+\|f^1_m \|_{X_1^1(T)},
		\end{align*}
		and
		\begin{align*}
			\|\tilde{v}_m\|_{X_1^2(t^*)}&\leq C(\gamma_2,M_2,\|(u,v)\|_{X_{1}(T(\omega))})(t^*)^{\frac{1}{2}}\|(\tilde{u}_m,\tilde{v}_m)\|_{X_1(t^*)}\\
			&\quad  +C(T)\|l_m^2 \|_{X^2_1(T)}+\|f^2_m \|_{X_1^2(T)}
		\end{align*}
		for any $t^*\in(0,T(\omega)]$, which implies that for a sufficiently small $t^*>0$, $\|(\tilde{u}_m,\tilde{v}_m)\|_{X_1(t^*)}$ converges to $0$ in the sense of $X_1(t^*)$ as $m\uparrow \infty$. 
	    Hence, we can get
	    \begin{equation}\label{final_proof_2}
	    	\lim_{m\uparrow\infty} \|(\tilde{u}_m,\tilde{v}_m)\|_{X_1(T)}=0 \ a.s.\mathbb{P},
	    \end{equation}
		by the former arguments and $\lim_{n\uparrow\infty}\|(\tilde{u}_{m}(t^*),\tilde{v}_{m}(t^*))\|_{\mathcal{H}_x^1}=0$ step by step.
	\end{proof}
	
	By the priori estimate and the convergence properties of $u_{m,n,k}$, we can finish the proof of Theorem \ref{thm:main}.
	
	\begin{proof}[\textbf{Proof of Theorem \ref{thm:main}}]		
		Combining \eqref{final_proof_1} and \eqref{final_proof_2}, we have 
		$$
		\|(u,v)\|_{L_T^\infty \mathcal{H}_x^1}<\infty \ a.s.\mathbb{P}. $$
		Hence, we can extend the local solution step by step through fixed point arguments and get a strong solution of \eqref{smodel} almost surely.
	\end{proof}

	According to the Theorem \ref{thm:main}, we can get Corollary \ref{cor:higher_case} for higher regularity cases.
\begin{coro}\label{cor:higher_case}
	Suppose the conditions of Theorem \ref{thm:main} hold. For any $ s\geq 1$, if $\Phi\in L_2^{0,s}\times L_2^{0,s}$ and $(u_0,v_0)\in L^2(\Omega;\mathcal{H}^s)$, then for any $ T>0$, \eqref{smodel} exists a unique strong solution $w \in {X_{s}(T)} \ a.s.\mathbb{P}$. 
\end{coro}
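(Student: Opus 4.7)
The plan is to bootstrap the $X_1(T)$ solution furnished by Theorem \ref{thm:main} up to $X_s(T)$, exploiting a crucial structural feature of Lemma \ref{lem:workspace}(ii)--(iii): the high-regularity norm $X_\sigma$ appears only \emph{linearly} on the right-hand side, multiplied by $X_1$ norms. Fix a full-measure event on which Theorem \ref{thm:main} yields a unique $w \in X_1(T)$ and on which the linear ingredients at regularity $s$ are finite: $\|u_0\|_{H^s_x} + \|v_0\|_{H^s_x}$ is a.s.\ finite from $(u_0,v_0) \in L^2(\Omega; \mathcal{H}^s)$, while $\|\breve{w}\|_{X_s(T)}$ is a.s.\ finite by Proposition \ref{prop:linear} applied at $\sigma=s$, which uses exactly the hypothesis $\Phi \in L_2^{0,s} \times L_2^{0,s}$.

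Starting from the mild equation
\begin{equation*}
w = (S(t)u_0,\,U(t)v_0) + \breve{w} + \int_0^t\bigl(S(t-s)N_1(w),\,U(t-s)N_2(w)\bigr)\,ds,
\end{equation*}
and writing $M(\omega):= 1 + \|w\|_{X_1(T)} + \|w\|_{X_1(T)}^2$, Lemma \ref{lem:workspace} produces on every subinterval $[t_0, t_0+\tau]\subset [0,T]$ an estimate of the form
\begin{equation*}
\|w\|_{X_s([t_0,t_0+\tau])} \leq C_0(\omega,t_0) + C(T,s)\bigl(\tau + \tau^{1/2}\bigr) M(\omega)\,\|w\|_{X_s([t_0,t_0+\tau])},
\end{equation*}
where $C_0(\omega,t_0)$ collects the $X_s$ norms of the free evolution from $w(t_0)$ and of the stochastic convolution on $[t_0,t_0+\tau]$. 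Choosing $\tau=\tau(\omega)>0$ so small that $C(T,s)(\tau+\tau^{1/2})M(\omega) \leq 1/2$, I absorb the last term and obtain $\|w\|_{X_s([t_0,t_0+\tau])} \leq 2 C_0(\omega,t_0)$. Since $X_s\hookrightarrow C([t_0,t_0+\tau]; \mathcal{H}^s)$, the endpoint $w(t_0+\tau)$ lies in $\mathcal{H}^s$ a.s., so the argument restarts with the $\mathscr{F}_{t_0+\tau}$-measurable datum $w(t_0+\tau)$. As $\tau(\omega)$ is bounded below by a positive function of $M(\omega)$ alone and $T$ is finite, finitely many iterations cover $[0,T]$.

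The main subtlety is justifying the a priori finiteness of $\|w\|_{X_s([t_0,t_0+\tau])}$ so that the absorption step is legitimate. I handle this by performing the same bootstrap on the smooth approximations $\{(u_m,v_m)\}$ of \eqref{smoothinitialandnoise}: by Proposition \ref{prop:globalwellposedness} each path lies in $C([0,T]; H^2\times H^3)$ and hence in $X_s(T)$ a.s., and the uniform $X_1$ control of $(u_m,v_m)$ inherited from the estimate \eqref{priori_esti_finish} combined with the linear-in-$X_s$ inequality above yields bounds on $\|(u_m,v_m)\|_{X_s(T)}$ that are uniform in $m$. Passing to the limit through the convergence $(u_m,v_m)\to (u,v)$ in $X_1(T)$ of Lemma \ref{lem:converge_m}, together with weak-$\ast$ compactness in the Banach components of $X_s$, one concludes $w\in X_s(T)$ almost surely. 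Uniqueness is inherited from Theorem \ref{thm:main} via the embedding $X_s(T)\hookrightarrow X_1(T)$.
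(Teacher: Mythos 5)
Your proposal is correct and follows essentially the same route as the paper, whose entire proof of this corollary is the one-line remark that it ``can be followed by Lemma \ref{lem:workspace} and Proposition \ref{prop:linear}'' --- precisely the two ingredients (the linear-in-$X_\sigma$ structure of the Duhamel estimates and the $H^s$ bound on the stochastic convolution) that your bootstrap-and-absorb argument is built on. Your write-up supplies the details the paper omits (the subinterval iteration with $\tau(\omega)$ depending only on the a.s.\ finite $X_1(T)$ norm, and the justification of a priori finiteness via the smooth approximations), and is if anything more careful than the original.
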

\begin{proof}
	This proof can be followed by Lemma \ref{lem:workspace} and Proposition \ref{prop:linear}.
\end{proof}	

	\phantomsection
	\bibliographystyle{amsplain}
	\addcontentsline{toc}{section}{References}
	\bibliography{reference}

	\scriptsize\textsc{Jie Chen: Institute of Applied Physics and Computational Mathematics, Beijing 100088, P.R. China.}
	
	\textit{E-mail address}: \textbf{jiechern@163.com}
	\vspace{20pt}
	
	\scriptsize\textsc{$*$ Corresponding Author}
	
	\scriptsize\textsc{Fan Gu: Institute of Applied Physics and Computational Mathematics, Beijing 100088, P.R. China.}
	
	\textit{E-mail address}: \textbf{gufan@amss.ac.cn}
	\vspace{20pt}
	
	\scriptsize\textsc{Boling Guo: Institute of Applied Physics and Computational Mathematics, Beijing 100088, P.R. China.}
	
	\textit{E-mail address}: \textbf{gbl@mail.iapcm.ac.cn}
	\vspace{20pt}
\end{document}